\theoremstyle{plain}
\newtheorem{theorem}{Theorem}[section]
\newtheorem{lemma}[theorem]{Lemma}
\newtheorem{proposition}[theorem]{Proposition}
\newtheorem{corollary}[theorem]{Corollary}
\theoremstyle{definition}
\theoremstyle{remark}
\newtheorem{remark}[theorem]{Remark}
\newtheorem{conjecture}{Conjecture}
\newtheorem{notations}[theorem]{Notations}
\newcommand{\ic}{\ensuremath{\mathcal{I}}}
\newcommand{\oc}{\ensuremath{\mathcal{O}}}
\newcommand{\fc}{\ensuremath{\mathcal{F}}}
\newcommand{\dc}{\ensuremath{\mathcal{D}}}
\newcommand{\hc}{\ensuremath{\mathcal{H}}}
\newcommand{\mc}{\ensuremath{\mathcal{M}}}
\newcommand{\Ps}{\mathbb{P}}
\newcommand{\bC}{\mathbb{C}}
\newcommand{\bG}{\mathbb{G}}
\def\bin #1#2 {\left( \matrix { #1 \cr #2 \cr } \right) }
\newcommand{\tG}{\tilde{G}}
\newcommand{\tX}{\tilde{X}}
\begin{document}

\title[Monodromy of a family of hypersurfaces]
{Monodromy of a family of hypersurfaces}

\author{Vincenzo Di Gennaro }
\address{Universit\`a di Roma \lq\lq Tor Vergata\rq\rq, Dipartimento di Matematica,
Via della Ricerca Scientifica, 00133 Roma, Italy.}
\email{digennar@axp.mat.uniroma2.it}

\author{Davide Franco }
\address{Universit\`a di Napoli
\lq\lq Federico II\rq\rq, Dipartimento di Matematica e
Applicazioni \lq\lq R. Caccioppoli\rq\rq, P.le Tecchio 80, 80125
Napoli, Italy.} \email{davide.franco@unina.it}

\bigskip
\abstract Let $Y$ be an $(m+1)$-dimensional irreducible smooth
complex projective variety embedded in a projective space. Let $Z$
be  a closed subscheme of $Y$, and $\delta$ be a positive integer
such that $\mathcal I_{Z,Y}(\delta)$ is generated by global
sections. Fix an integer $d\geq \delta +1$, and assume the general
divisor $X \in |H^0(Y,\ic_{Z,Y}(d))|$ is smooth. Denote by
$H^m(X;\mathbb Q)_{\perp Z}^{\text{van}}$  the quotient of
$H^m(X;\mathbb Q)$ by the cohomology of $Y$ and also by the cycle
classes of the irreducible components of dimension $m$ of $Z$. In
the present paper we prove that the monodromy representation on
$H^m(X;\mathbb Q)_{\perp Z}^{\text{van}}$ for the family of smooth
divisors $X \in |H^0(Y,\ic_{Z,Y}(d))|$ is irreducible.

\bigskip\noindent
{{R\'ESUM\'E.}}$\,$ Soit $Y$ une vari\'et\'e projective complexe
lisse irr\'eductible de dimension $m+1$, plong\'ee dans un espace
projectif. Soit $Z$ un sous-sch\'ema ferm\'e de $Y$, et soit
$\delta$ un entier positif tel que $\mathcal I_{Z,Y}(\delta)$ soit
engendr\'e par ses sections globales. Fixons un entier $d\geq
\delta +1$, et supposons que le diviseur g\'en\'eral $X \in
|H^0(Y,\ic_{Z,Y}(d))|$ soit lisse. D\'esignons par $H^m(X;\mathbb
Q)_{\perp Z}^{\text{van}}$ le quotient de $H^m(X;\mathbb Q)$ par
la cohomologie de $Y$ et par les classes des composantes
irr\'eductibles de $Z$ de dimension $m$. Dans cet article nous
prouvons que la repr\'esentation de monodromie sur $H^m(X;\mathbb
Q)_{\perp Z}^{\text{van}}$ pour la famille des diviseurs lisses $X
\in |H^0(Y,\ic_{Z,Y}(d))|$ est irr\'eductible.

\bigskip\noindent {\it{Keywords and phrases}}: Complex projective
variety, Linear system, Lefschetz Theory, Monodromy, Isolated
singularity, Milnor fibration.

\medskip\noindent {\it{MSC2000}}\,: 14B05, 14C20, 14C21, 14C25, 14D05, 14M10, 32S55.

\endabstract

\maketitle

\section{Introduction}

In this paper we  provide an affirmative answer to a question
formulated in \cite{OS}.

Let $Y\subseteq \Ps^N$ ($dim\,Y=m+1$) be an irreducible smooth
complex projective variety embedded in a projective space $\Ps^N$,
$Z$ be a closed subscheme of $Y$, and  $\delta$ be a positive
integer such that $\mathcal I_{Z,Y}(\delta)$ is generated by
global sections. Assume that for $d\gg 0$ the general divisor $X
\in |H^0(Y,\ic_{Z,Y}(d))|$ is smooth. In the paper \cite{OS} it is
proved that this is equivalent to the fact that the strata
$Z_{\{j\}}=\{x\in Z\,:\,dim\,T_xZ=j\}$, where $T_xZ$ denotes the
Zariski tangent space,  satisfy the following inequality:
\begin{equation}
\label{Kleiman} dim\,Z_{\{j\}}+j\leq dim\,Y-1 \quad{\text{for
any}}\quad j\leq dim\,Y.
\end{equation}

\noindent This property implies that, for any $d\geq \delta$,
there exists a smooth hypersurface of degree $d$ which contains
$Z$ (\cite{OS}, 1.2. Theorem).

It is generally expected that, for $d\gg 0$, the Hodge cycles of
the general hypersurface $X$ $\in$ $|H^0(Y,\ic_{Z,Y}(d))|$ depend
only on $Z$ and on the ambient variety $Y$. A very precise
conjecture in this direction was made in \cite{OS}:

\begin{conjecture}[Otwinowska - Saito]
\label{conj} Assume $deg\,X\geq \delta +1$. Then the monodromy
representation on $H^m(X;\mathbb Q)_{\perp Z}^{\text{van}}$ for
the family of smooth divisors $X\in |H^0(Y,\mathcal O_Y(d))|$
containing $Z$ as above is irreducible.
\end{conjecture}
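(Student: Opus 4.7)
The plan is to follow the classical Lefschetz--Deligne strategy for proving irreducibility of monodromy, adapted to accommodate the base scheme $Z$ of the linear system $|H^0(Y,\ic_{Z,Y}(d))|$. The core reduction is standard: once we know (i) the vanishing cycles generate the whole representation $H^m(X;\mathbb Q)_{\perp Z}^{\text{van}}$, and (ii) the monodromy acts transitively on the set of vanishing cycles of a general Lefschetz pencil, the Picard--Lefschetz formula together with Deligne's argument (as in \emph{SGA 7}) yields irreducibility: any nonzero invariant subspace contains some vanishing cycle (by non-triviality combined with (i)), and hence, by transitivity, contains all of them.

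First, I would set up the geometry. Since $d \geq \delta+1$ and $\ic_{Z,Y}(\delta)$ is globally generated, we may factor a general member as a sum of a general element of $|H^0(Y,\ic_{Z,Y}(\delta))|$ with a general element of $|\oc_Y(d-\delta)|$; combined with the Otwinowska--Saito smoothness criterion, this forces a general pencil $\Pi \cong \Ps^1 \subset |H^0(Y,\ic_{Z,Y}(d))|$ to consist of divisors that are smooth outside a finite set of points away from $Z$, with only nodes as singularities. Blowing up the base scheme of $\Pi$ inside $Y$ produces a Lefschetz fibration $\tilde Y \to \Ps^1$ with nodal degenerate fibers, to which classical Picard--Lefschetz theory applies.

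Next I would address (i), which is the subtler of the two claims. The standard Lefschetz exact sequence identifies the span of the vanishing cycles with the cokernel of the restriction $H^m(\tilde Y;\mathbb Q) \to H^m(X;\mathbb Q)$, so one must show that this cokernel equals the quotient of $H^m(X;\mathbb Q)$ by the image of $H^m(Y;\mathbb Q)$ and by the cycle classes of the $m$-dimensional components of $Z$. The contribution of $H^m(Y;\mathbb Q)$ is automatic; the subtle part is that blowing up along $Z$ introduces exceptional divisors whose middle cohomology on $\tilde Y$ restricts exactly to the cycle classes of the $m$-dimensional components of $Z$ on $X$. This calculation, carried out via the blow-up formula and a careful stratification of the exceptional locus according to the tangent-space stratification $Z_{\{j\}}$, is what ties the vanishing subspace to the precise quotient appearing in the statement.

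For (ii), I would argue via the irreducibility of the discriminant hypersurface $\dc \subset |H^0(Y,\ic_{Z,Y}(d))|$. The condition $d \geq \delta+1$ guarantees that for every smooth point $y \in Y \setminus Z$ the sublinear system of divisors singular at $y$ has constant codimension, and as $y$ varies over the irreducible smooth locus $Y \setminus Z$ the total incidence variety is irreducible; its projection is $\dc$. A generic Lefschetz pencil meets $\dc$ transversally in $\deg \dc$ points, and the uniform monodromy theorem (together with the surjection $\pi_1(|H^0(Y,\ic_{Z,Y}(d))| \setminus \dc) \twoheadrightarrow \pi_1(\Pi \setminus \dc)$) implies that the Galois action on the fiber permutes the nodes transitively, and hence transports vanishing cycles to one another. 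The hardest step will be step (i), namely matching the exceptional contributions of the blow-up precisely with the classes of the $m$-dimensional components of $Z$ in $H^m(X;\mathbb Q)$; handling the non-reduced or singular components of $Z$ (as allowed by the Otwinowska--Saito condition \eqref{Kleiman}) requires a delicate local analysis of $\tilde Y \to Y$ along each stratum $Z_{\{j\}}$.
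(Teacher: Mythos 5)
The central gap is in the sentence ``blowing up the base scheme of $\Pi$ inside $Y$ produces a Lefschetz fibration $\tilde Y \to \Ps^1$ with nodal degenerate fibers, to which classical Picard--Lefschetz theory applies.'' This is not true for the linear system $|H^0(Y,\ic_{Z,Y}(d))|$ when $Z$ is allowed to be an arbitrary closed subscheme satisfying the Otwinowska--Saito condition \eqref{Kleiman}. The blow-up of $Y$ along the base locus need not be smooth (its singular locus can be positive-dimensional for bad $Z$), and the singular members of a general pencil need not have nodes: nodes occur at general points of $Y\setminus Z$, but the pencil also degenerates along $Z$ and along the singular locus of the blown-up total space, where the local monodromy is \emph{not} described by the Picard--Lefschetz formula. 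Without this, both of your steps (i) and (ii) are unsupported: the Lefschetz exact sequence you invoke is only valid for a smooth total space with nodal fibers, and ``transitivity on the nodes'' says nothing about the monodromy contributions coming from the non-nodal degenerations. In particular, showing that \emph{those} contributions lie in (or act trivially modulo) the span of the classes of the $m$-dimensional components of $Z$ is precisely the hard analytic content of the problem, and your proposal waves at it with a reference to the blow-up formula, which does not apply to blow-ups along singular or non-reduced centers.

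The paper's route avoids this entirely by two reductions you do not make. First (Proposition~\ref{induzione}), it replaces $Z$ by a complete intersection $W=G\cap X$ containing $Z$, with $G\in|V_\delta|$, $X\in|V_d|$ smooth; then $W$ has only \emph{isolated} singularities, and one proves $H^m(X;\mathbb Q)_{Z}^{\text{van}}=H^m(X;\mathbb Q)_{W}^{\text{van}}$ by induction on $m$ using intersection cohomology and the Lefschetz hyperplane theorem for IH. Second (Section~\ref{setup}), the rational map given by $|H^0(Y,\ic_{W,Y}(d))|$ is resolved by $Bl_W(Y)\to Q\subset\Ps$, where $Q$ has only isolated singularities (including the vertex $v_\infty$ to which $\tG$ is contracted), and the decisive new ingredient is Theorem~\ref{ngeneralfact}/Proposition~\ref{ntrivial}: for such a $Q$, the vanishing cohomology of a general hyperplane section is still generated by \emph{tangential} vanishing cycles, because the local monodromy around the dual hyperplanes $\hc_j$ of the isolated singular points acts trivially modulo those. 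This is exactly the statement your proposal would need for the ``extra'' critical values and does not prove. The irreducibility-of-the-dual-variety argument in your step (ii) is fine as far as it goes, but it only controls the tangency locus $Q^*$, not the hyperplanes $\hc_1,\dots,\hc_r$ through the singular points, and the latter is where the real work lies.
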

\noindent We denote by
 $H^m(X;\mathbb Q)_Z^{\text{van}}$  the subspace of
$H^m(X;\mathbb Q)^{\text{van}}$ generated by the cycle classes of
the maximal dimensional irreducible components of $Z$ modulo the
image of $H^m(Y;\mathbb Q)$ (using the orthogonal decomposition
$H^m(X;\mathbb Q)=H^m(Y;\mathbb Q)\perp H^m(X;\mathbb
Q)^{\text{van}}$) if $m=2\,dim\,Z$, and $H^m(X;\mathbb
Q)_Z^{\text{van}}=0$ otherwise, and we denote by $H^m(X;\mathbb
Q)_{\perp Z}^{\text{van}}$  the orthogonal complement of
$H^m(X;\mathbb Q)_Z^{\text{van}}$ in $H^m(X;\mathbb
Q)^{\text{van}}$. The conjecture above cannot be strengthened
because, even in $Y=\mathbb{P}^3$, there exist examples for which
$dim\,H^m(X;\mathbb Q)_{\perp Z}^{\text{van}}$ is arbitrarily
large and the monodromy representation associated to the linear
system $|H^0(Y, \ic_{Z,Y}(\delta))|$  is diagonalizable.

The Authors of \cite{OS} observed that a proof for such a
conjecture  would confirm the expectation above and would reduce
the Hodge conjecture for the general hypersurface $X_t \in
|H^0(Y,\ic_{Z,Y}(d))|$ to the Hodge conjecture for $Y$. More
precisely, by a standard argument, from Conjecture \ref{conj} it
follows that when $m=2\,dim\,Z$ and the vanishing cohomology of
the general $X_t \in |H^0(Y,\ic_{Z,Y}(d))|$ ($d\geq \delta +1$) is
not of pure Hodge type $(m/2,m/2)$, then the Hodge cycles in the
middle cohomology of $X_t$ are generated by the image of the Hodge
cycles on $Y$ together with the cycle classes of the irreducible
components of $Z$. So, the Hodge conjecture for $X_t$ is reduced
to that for $Y$ (compare with \cite{OS}, Corollary 0.5). They also
proved that the conjecture is satisfied in the range $d\geq\delta
+2$, or for $d=\delta +1$ if hyperplane sections of $Y$ have non
trivial top degree holomorphic forms (\cite{OS}, 0.4. Theorem).
Their proof relies on Deligne's semisimplicity Theorem and on
Steenbrink's Theory for semistable degenerations.

Arguing in a  different way, we prove in this paper Conjecture
\ref{conj} in full.  More precisely, avoiding degeneration
arguments, in Section $2$ we will deduce Conjecture \ref{conj}
from the following:

\begin{theorem}
\label{thm2} Fix integers $1\leq k<d$, and let $W=G\cap X\subset
Y$ be a complete intersection of smooth divisors $G\in
|H^0(Y,\mathcal O_Y(k))|$ and $X\in |H^0(Y,\mathcal O_Y(d))|$.
Then the monodromy representation on $H^m(X;\mathbb Q)_{\perp
W}^{\text{van}}$ for the family of smooth divisors $X_t\in
|H^0(Y,\mathcal O_Y(d))|$ containing $W$ is irreducible.
\end{theorem}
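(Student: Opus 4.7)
My plan is to adapt the classical Lefschetz--Zariski argument for irreducibility of monodromy on vanishing cohomology to the restricted linear system $\Lambda := |H^0(Y,\ic_{W,Y}(d))|$, using the complete intersection structure of $W$ in a crucial way. Since $d>k$, the sheaf $\ic_{W,Y}(d)$ has sections $f$ and $g\cdot H$ for every $H\in|H^0(Y,\oc_Y(d-k))|$, so $\Lambda$ has positive dimension and its base locus equals $W$. The hypothesis that $X$ is smooth, combined with Bertini applied to $\Lambda$, yields a generic Lefschetz pencil $\{X_t\}_{t\in\Pu}\subset\Lambda$ whose singular fibers are nodal away from $W$. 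I resolve the pencil by blowing up $Y$ along $W$, obtaining a smooth projective morphism $\pi\colon\tilde Y\to\Pu$. Because $W$ is Cartier in each $X_t$ (as the divisor cut by the restriction of $g$), the strict transform $\tilde X_t$ is mapped isomorphically to $X_t$ by $\sigma\colon\tilde Y\to Y$.

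Deligne's global invariant cycle theorem applied to $\pi$ identifies the $\pi_1(\Pu\setminus\dc)$-invariants of $H^m(X_t;\bQ)$ with the image of $H^m(\tilde Y;\bQ)\to H^m(X_t;\bQ)$. The codimension-two blow-up formula decomposes $H^m(\tilde Y;\bQ)=\sigma^*H^m(Y;\bQ)\oplus j_*(c\cdot\pi_E^*H^{m-2}(W;\bQ))$, where $E$ is the exceptional divisor and $c$ is the tautological class. Restriction of the first summand to $\tilde X_t\cong X_t$ gives $H^m(Y;\bQ)|_{X_t}$. For the second, since $E\cap\tilde X_t$ is a section of $E\to W$, a base change shows that the restriction lands inside $i_*H^{m-2}(W;\bQ)\subset H^m(X_t;\bQ)$, where $i\colon W\hookrightarrow X_t$. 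Since $W$ is an ample divisor in $X_t$ (of class $kH|_{X_t}$ with $H$ the hyperplane class of $Y$), Lefschetz hyperplane gives $H^{m-2}(W;\bQ)=i^*H^{m-2}(X_t;\bQ)$; a second application to $X_t\subset Y$ gives $H^{m-2}(X_t;\bQ)=H^{m-2}(Y;\bQ)|_{X_t}$, and combining via the projection formula yields
\[
i_*H^{m-2}(W;\bQ)\;=\;[W]\cdot H^{m-2}(Y;\bQ)|_{X_t}\;\subset\;H^m(Y;\bQ)|_{X_t}.
\]
Hence the invariants in $H^m(X_t;\bQ)$ coincide with $H^m(Y;\bQ)|_{X_t}$, so the invariants in $H^m(X_t;\bQ)^{\text{van}}$ vanish. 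The same computation shows $[W]\in H^m(Y;\bQ)|_{X_t}$ in the exceptional case $m=2$, so $H^m(X_t;\bQ)_W^{\text{van}}=0$ and $H^m(X_t;\bQ)_{\perp W}^{\text{van}}=H^m(X_t;\bQ)^{\text{van}}$. The Lefschetz theorem for pencils then guarantees that the vanishing cycles $\delta_i$ attached to the nodal fibers span this space.

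To finish I must show that the monodromy acts transitively on $\{\pm\delta_i\}$, for which the standard argument is irreducibility of the discriminant hypersurface $\dc\subset\Lambda$. I would analyze the incidence $\{(p,t):p\in\operatorname{Sing}(X_t)\}\subset(Y\setminus W)\times\Lambda$: the fiber at $p\in Y\setminus W$ is the linear subspace cut out by the vanishing of the value and the first jet of a section at $p$, and one checks that $\ic_{W,Y}(d)$ separates $1$-jets on $Y\setminus W$ (using $d>k$ together with $1$-jet ampleness of $\oc_Y(d-k)$ at a general point), so this subspace has the expected codimension; hence the part of the incidence over $Y\setminus W$ is irreducible and dominates an irreducible hypersurface in $\Lambda$, while a dimension count rules out a distinct component supported on $W$. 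With irreducibility of $\dc$ in hand, the classical Picard--Lefschetz argument closes the proof: any nonzero monodromy-invariant $V\subseteq H^m(X_t;\bQ)_{\perp W}^{\text{van}}$ cannot pair trivially with every $\delta_i$ (since the $\delta_i$ span the space), so via $T_{\delta_i}v=v\pm(v,\delta_i)\delta_i$ it contains some $\delta_i$, hence all $\delta_j$ by transitivity, and thus equals the whole space. I expect the dimension count in the last step, ruling out an extra component of $\dc$ supported on $W$, to be the main technical obstacle, as one must exploit the explicit description of the sections of $\ic_{W,Y}(d)$ afforded by the complete intersection structure.
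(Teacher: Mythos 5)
There is a genuine gap, and it appears already at the crucial claim that $H^m(X_t;\mathbb Q)_W^{\text{van}}=0$. Your argument works throughout with $H^{m-2}(W;\mathbb Q)$ and the Gysin map $i_*:H^{m-2}(W;\mathbb Q)\to H^m(X_t;\mathbb Q)$, whose image you correctly identify (via $i_*i^*\alpha=[W]\cup\alpha$ and Lefschetz) as lying inside $H^m(Y;\mathbb Q)|_{X_t}$. But $H^m(X_t;\mathbb Q)_W^{\text{van}}$ is defined as the image of $H_m(W;\mathbb Q)\to H_m(X_t;\mathbb Q)\cong H^m(X_t;\mathbb Q)$, and for singular $W$ the group $H_m(W;\mathbb Q)$ is not Poincar\'e dual to $H^{m-2}(W;\mathbb Q)$. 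Here $W=G\cap X$ has isolated singularities in general (\cite{Vogel}, cf. the paper's Notations 2.1(i)), and in the setting of Conjecture 1 one deliberately chooses $W\supset Z$; when $m=2$ this forces $W$ to be a reducible curve $W=Z\cup C$ as soon as $Z$ is not a complete intersection. Concretely: let $Y=\Ps^3$, $Z$ a twisted cubic, $G$ a smooth quadric through $Z$, $X$ a smooth quartic through $Z$. Then $W=Z\cup C$, $H_2(W;\mathbb Q)\cong\mathbb Q^2$ is generated by $[Z]$ and $[C]$, and $[Z]$ has nonzero image in $H^2(X;\mathbb Q)^{\text{van}}$ since $[Z]\ne k\,h|_X$ for any $k$. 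So $H^2(X;\mathbb Q)_W^{\text{van}}\ne 0$, contradicting your conclusion. Your Gysin-map computation only captures $[W]=[Z]+[C]=2h|_X$, i.e., the image of $H^0(W;\mathbb Q)$, which is strictly smaller than the image of $H_2(W;\mathbb Q)$. This is exactly the distinction the paper preserves: its version of your step is the inclusion $I\subseteq H^m(Y;\mathbb Q)+H^m(X_t;\mathbb Q)_W^{\text{van}}$, not $I\subseteq H^m(Y;\mathbb Q)$.

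Two of the tools you invoke also fail to apply as stated. The codimension-two blow-up decomposition $H^m(\tilde Y;\mathbb Q)=\sigma^*H^m(Y;\mathbb Q)\oplus j_*(c\cdot\pi_E^*H^{m-2}(W;\mathbb Q))$ requires the center $W$ to be smooth; here it has isolated singularities. And Deligne's invariant cycle theorem requires the total space of the family to be smooth and projective, whereas $\mathrm{Bl}_W(Y)$ has a finite set of singular points (see the paper's discussion in Section 4.1(iv)); you would have to pass to a resolution and then track carefully what the exceptional divisors contribute. The paper's route instead maps $\mathrm{Bl}_W(Y)$ to its image $Q\subseteq\Ps(H^0(Y,\ic_{W,Y}(d))^*)$, observes $Q$ has only isolated singularities (including the vertex $v_\infty$), proves a dedicated monodromy theorem for hyperplane sections of a projective variety with isolated singularities (Theorem 3.4 and Corollary 3.9, which use intersection cohomology and a Picard--Lefschetz analysis near the singular points), and then shows $I=H^m(Y;\mathbb Q)+H^m(X_t;\mathbb Q)_W^{\text{van}}$ by a Mayer--Vietoris and Gysin-diagram argument that respects the distinction between $H_m(W)$ and $H^{m-2}(W)$. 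If you wish to salvage a Lefschetz-pencil approach, the indispensable missing ingredient is precisely this monodromy statement in the presence of isolated singularities of $Q$ (equivalently, the contribution of the vanishing cycles supported near $\mathrm{Sing}(W)$ and near $v_\infty$), together with the homological (not cohomological) identification of the invariant part.
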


\noindent Here we define $H^m(X;\mathbb Q)_{\perp W}^{\text{van}}$
in a similar way as before, i.e. as the orthogonal complement in
$H^m(X;\mathbb Q)^{\text{van}}$ of the image $H^m(X;\mathbb Q)_{
W}^{\text{van}}$ of the  map obtained by composing the natural
maps $H_m(W;\mathbb Q)\to H_m(X;\mathbb Q)\cong H^m(X;\mathbb
Q)\to H^m(X;\mathbb Q)^{\text{van}}$.

The proof of Theorem  \ref{thm2} will be given in Section $4$ and
consists in a Lefschetz type argument applied to the image of the
rational map on $Y$ associated to the linear system
$|H^0(Y,\ic_{W,Y}(d))|$, which turns out to have at worst isolated
singularities. This approach was started in our paper \cite{DGF}
where we proved a particular case of Theorem \ref{thm2}, but the
proof given here is independent and much  simpler.

We begin by proving   Conjecture \ref{conj} as a consequence of
Theorem \ref{thm2}, and next we prove Theorem \ref{thm2}.

\section{Proof of Conjecture \ref{conj} as a consequence of Theorem \ref{thm2}.}

We keep the same notation we introduced before, and need further
preliminaries.

\begin{notations}\label{notazioni}
(i) Let $V_{\delta}\subseteq H^0(Y,\mathcal I_{Z,Y}(\delta))$ be a
subspace generating $\mathcal I_{Z,Y}(\delta)$, and
$V_{d}\subseteq H^0(Y,\mathcal I_{Z,Y}(d))$ ($d\geq \delta +1$) be
a subspace containing the image of $V_{\delta}\otimes H^0(\Ps^N,
\mathcal O_{\Ps^N}(d-\delta))$ in $H^0(Y,\mathcal I_{Z,Y}(d))$.
Let $G\in |V_{\delta}|$ and $X\in |V_d|$ be divisors. Put
$W:=G\cap X$. From condition (\ref{Kleiman}), and \cite{OS}, 1.2.
Theorem, we know that if $G$ and $X$ are general then they are
smooth. Moreover, by (\cite{Vogel}, p. 133, Proposition 4.2.6. and
proof), we know that if $G$ and $X$ are smooth then $W$ has only
isolated singularities.

(ii) In the case $m>2$, fix a smooth $G\in |V_{\delta}|$. Let
$H\in |H^0(\Ps^N, \mathcal O_{\Ps^N}(l))|$ be a general
hypersurface of degree $l\gg 0$, and put $Z':= Z\cap H$ and
$G':=G\cap H$. Denote by  $V'_{d}\subseteq H^0(G',\mathcal
I_{Z',G'}(d))$ the restriction of   $V_{d}$ on $G'$, and by
$V''_{d}\subseteq H^0(G,\mathcal I_{Z,G}(d))$ the restriction of
$V_{d}$ on $G$. Since $H^0(G,\mathcal I_{Z,G}(d))\subseteq
H^0(G',\mathcal I_{Z',G'}(d))$,  we may identify $V''_{d}=V'_{d}$.
Put $W':=W\cap H\in |V'_{d}|$. Similarly as we did for the triple
$(Y,X,Z)$, using the orthogonal decomposition $H^{m-2}(W';\mathbb
Q)=H^{m-2}(G';\mathbb Q)\perp H^{m-2}(W';\mathbb Q)^{\text{van}}$,
we  define the subspaces $H^{m-2}(W';\mathbb Q)_{Z'}^{\text{van}}$
and $H^{m-2}(W';\mathbb Q)_{\perp Z'}^{\text{van}}$ of
$H^{m-2}(W';\mathbb Q)$ with respect to the triple $(G',W',Z')$.
Passing from $(Y,X,Z)$ to $(G',W',Z')$  will allow us to prove
Conjecture \ref{conj} arguing by induction on $m$ (see the proof
of Proposition \ref{induzione} below).

(iii) Let $\varphi:\mathcal W \to |V''_{d}|$ ($\mathcal W
\subseteq G \times |V''_{d}|$) be the universal family
parametrizing the divisors  $W=G\cap X\in |V''_{d}|$. Denote by
$\sigma: \widetilde { \mathcal W}\to \mathcal W$ a
desingularization of $\mathcal W$, and by $U_{\varphi}\subseteq
|V''_{d}|$ a nonempty open set such that the restriction
$(\varphi\circ \sigma)_{|U_{\varphi}}: (\varphi\circ
\sigma)^{-1}{(U_{\varphi})}\to U_{\varphi}$ is smooth. Next, let
$\psi:\mathcal W' \to |V'_{d}|$ ($\mathcal W' \subseteq G \times
|V'_{d}|$) be the universal family parametrizing the divisors
$W'=W\cap H\in |V'_{d}|$, and denote by $U_{\psi}\subseteq
|V'_{d}|$ a nonempty open set such that the restriction
$\psi_{|U_{\psi}}: \psi^{-1}{(U_{\psi})}\to U_{\psi}$ is smooth.
Shrinking $U_{\varphi}$ and $U_{\psi}$ if necessary, we may assume
$U:= U_{\varphi}=U_{\psi}\subseteq |V''_{d}|=|V'_{d}|$. For any
$t\in U$ put $W_t:=\varphi^{-1}(t)$, ${\widetilde
W}_t:={\sigma}^{-1}(W_t)$, and $W'_t:=\psi^{-1}(t)$. Observe that
$W_t\cap Sing(\mathcal W)\subseteq Sing(W_t)$, so we may assume
$W'_t=W_t\cap H\subseteq W_t\backslash Sing(W_t)\subseteq
{\widetilde W}_t$. Denote by $\iota_t$ and $\tilde \iota_t$ the
inclusion maps $W'_t\to W_t$ and $W'_t\to {\widetilde W}_t$. The
pull-back maps ${\tilde \iota_t}^*: H^{m-2}({\widetilde
W}_t;\mathbb Q)\to H^{m-2}(W'_t;\mathbb Q)$ give rise to a natural
map ${\tilde \iota}^*: R^{m-2}((\varphi\circ
\sigma)_{|U})_{*}\mathbb Q\to R^{m-2}({\psi_{|U}})_{*}\mathbb Q $
between local systems on $U$, showing that $Im({\tilde
\iota_t}^*)$ is globally invariant under the monodromy action on
the cohomology of the smooth  fibers  of $\psi$. Finally, we
recall that the inclusion map $\iota_t$ defines a Gysin map
$\iota_t^{\star}: H_{m}(W_t;\mathbb Q)\to H_{m-2}(W'_t;\mathbb Q)$
(see \cite{Fulton}, p. 382, Example 19.2.1).
\end{notations}

\begin{remark}\label{note}
Fix a smooth $G\in |V_{\delta}|$, and assume $m\geq 2$. The linear
system $|V_d|$ induces an embedding of $G\backslash Z$ in some
projective space: denote by $\Gamma $ the image of $G\backslash Z$
through this embedding. Since $G\backslash Z$ is irreducible, then
also $\Gamma$ is, and so is its general hyperplane section, which
is isomorphic to $(G\cap X)\backslash Z$ via $|V_d|$. So we see
that, when $m\geq 2$, for any smooth $G\in |V_{\delta}|$ and any
general $X\in |V_d|$, one has that $W\backslash Z$ is irreducible.
In particular, when $m>2$, then also $W$ is irreducible.
\end{remark}

\begin{lemma}
\label{morfismo} Fix a smooth $G\in |V_{\delta}|$, and assume
$m>2$. Then, for  a general $t\in U$,  one has
$Im({\tilde\iota_t}^*)=Im(PD\circ \,\iota_t^{\star})$, and the map
$PD\circ \,\iota_t^{\star}$ is injective (PD means {\lq\lq
Poincar\'e duality\rq\rq}: $H_{m-2}(W'_t;\mathbb Q)\cong
H^{m-2}(W'_t;\mathbb Q)$).
\end{lemma}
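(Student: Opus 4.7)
The plan is to deduce both assertions from the compatibility of Gysin maps with the proper pushforward $\sigma_*$ combined with Poincar\'e duality on the smooth resolution $\tilde W_t$. Since $H$ is general, $W'_t=W_t\cap H$ lies in $W_t\setminus Sing(W_t)$, so $\sigma$ is an isomorphism on a neighborhood of $W'_t$, the identification $\sigma^{-1}(W'_t)=W'_t$ is unambiguous, and $\iota_t=\sigma\circ\tilde\iota_t$. Functoriality of the refined Gysin map for Cartier divisors under proper morphisms that are local isomorphisms near the divisor (cf.\ Fulton, \emph{Intersection Theory}, Ch.~6) yields
\[
\iota_t^{\star}\circ\sigma_*=\tilde\iota_t^!\colon H_m(\tilde W_t;\mathbb{Q})\to H_{m-2}(W'_t;\mathbb{Q}),
\]
where $\tilde\iota_t^!$ denotes the homology Gysin for the smooth divisor $W'_t\subset\tilde W_t$; Poincar\'e duality on the smooth $\tilde W_t$ (dimension $m-1$) further identifies $\tilde\iota_t^!$ with the pullback $\tilde\iota_t^*$.

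The key structural input, and the main technical obstacle, is the decomposition theorem applied to the resolution $\sigma$ of $W_t$, whose singularities are isolated: it supplies a splitting $R\sigma_*\mathbb{Q}_{\tilde W_t}\cong\mathbb{Q}_{W_t}\oplus\mathcal{K}$ with $\mathcal{K}$ supported on $Sing(W_t)$, equivalently a direct sum decomposition $H^k(\tilde W_t;\mathbb{Q})=\sigma^*H^k(W_t;\mathbb{Q})\oplus C_k$ for $k\geq 1$, where $C_k$ is concentrated at the singular points. In particular, $\sigma_*\colon H_m(\tilde W_t;\mathbb{Q})\to H_m(W_t;\mathbb{Q})$ is surjective. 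Because $\mathcal{K}|_{W'_t}=0$, the pullback $\tilde\iota_t^*$ kills $C_{m-2}$, while $\tilde\iota_t^*\circ\sigma^*=\iota_t^*$ shows it agrees with $\iota_t^*$ on $\sigma^*H^{m-2}(W_t;\mathbb{Q})$; hence $\ker(\tilde\iota_t^*)=C_{m-2}$ and $Im(\tilde\iota_t^*)=\iota_t^*(H^{m-2}(W_t;\mathbb{Q}))$. Assertion (a) then follows at once from
\[
Im(\tilde\iota_t^*)=PD\bigl(Im(\tilde\iota_t^!)\bigr)=PD\bigl(Im(\iota_t^{\star}\circ\sigma_*)\bigr)=PD\bigl(Im(\iota_t^{\star})\bigr),
\]
using the surjectivity of $\sigma_*$ in the last step.

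For assertion (b), the Lefschetz hyperplane theorem applied to the (singular) projective variety $W_t$ of dimension $m-1$ -- noting that $W_t\setminus W'_t$ is affine, to which Artin vanishing for constructible sheaves applies -- yields the injectivity of $\iota_t^*\colon H^{m-2}(W_t;\mathbb{Q})\to H^{m-2}(W'_t;\mathbb{Q})$. Given $c\in\ker(\iota_t^{\star})$, lift it to $c'\in H_m(\tilde W_t;\mathbb{Q})$ with $\sigma_*(c')=c$; then $\tilde\iota_t^!(c')=0$, so $\alpha:=PD(c')\in\ker(\tilde\iota_t^*)=C_{m-2}$. Via the local cohomology long exact sequence of the pair $(\tilde W_t,\tilde W_t\setminus E)$, with $E=\sigma^{-1}(Sing(W_t))$, any class in $C_{m-2}$ lies in the image of $H^{m-2}_E(\tilde W_t)\to H^{m-2}(\tilde W_t)$, and under Poincar\'e duality for local cohomology on the smooth $\tilde W_t$ it corresponds to a cycle in the image of $H_m(E)\to H_m(\tilde W_t)$. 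Pushing forward by $\sigma$, such a cycle lands in $H_m(Sing(W_t))=0$ (the singular locus is finite and $m>0$), so $c=\sigma_*(c')=0$, proving injectivity.
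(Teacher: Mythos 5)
Your approach via the decomposition theorem for the resolution $\sigma:\tilde W_t\to W_t$ is genuinely different from the paper's, which instead works directly with intersection cohomology: using (Dimca, 5.4.4 and (PD)) the paper identifies $H_m(W_t;\mathbb Q)\cong IH^{m-2}(W_t)\cong H^{m-2}(W_t\setminus Sing(W_t);\mathbb Q)$, identifies $Im(\tilde\iota_t^*)$ with the image of the restriction from $W_t\setminus Sing(W_t)$ via Voisin 16.23, and then invokes the Lefschetz hyperplane theorem for intersection cohomology to get injectivity. Your proof of assertion (a) -- the chain $Im(\tilde\iota_t^*)=PD(Im(\tilde\iota_t^!))=PD(Im(\iota_t^\star\circ\sigma_*))=PD(Im(\iota_t^\star))$ using Gysin functoriality and surjectivity of $\sigma_*$ -- is clean and essentially correct, though your justification of the surjectivity of $\sigma_*$ is based on a mis-stated theorem (see below); the conclusion $\sigma_*$ surjective is nevertheless true, so (a) can be repaired.

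However, there is a genuine gap in the treatment of assertion (b), stemming from your statement of the decomposition theorem. For a resolution of a variety with isolated singularities the decomposition theorem gives $R\sigma_*\mathbb Q_{\tilde W_t}[\dim W_t]\cong IC_{W_t}\oplus(\text{skyscrapers on }Sing(W_t))$, \emph{not} $\mathbb Q_{W_t}[\dim W_t]\oplus\mathcal K$; the constant sheaf is a direct summand only when $W_t$ is a rational homology manifold, which need not hold for an isolated complete-intersection singularity. Consequently the correct splitting is $H^{m-2}(\tilde W_t;\mathbb Q)\cong IH^{m-2}(W_t)\oplus D_{m-2}$ with $D_{m-2}$ concentrated at the exceptional locus, not $\sigma^*H^{m-2}(W_t)\oplus C_{m-2}$. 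Your argument for (b) rests on the identity $\ker(\tilde\iota_t^*)=C_{m-2}$, which you deduce from the injectivity of $\iota_t^*\colon H^{m-2}(W_t)\to H^{m-2}(W'_t)$ (Artin vanishing on the affine complement $W_t\setminus W'_t$). But injectivity of $\iota_t^*$ only controls $\ker(\tilde\iota_t^*)$ on the subspace $\sigma^*H^{m-2}(W_t)$, which in general is a \emph{proper} subspace of $IH^{m-2}(W_t)$: the natural map $H^{m-2}(W_t)\to IH^{m-2}(W_t)\cong H^{m-2}(W_t\setminus Sing(W_t))$ need not be surjective, since its cokernel injects into $H^{m-1}_{Sing(W_t)}(W_t)\cong\bigoplus_p \tilde H^{m-2}(L_p)$ (links of the singular points), which is typically nonzero for isolated LCI singularities of dimension $m-1$. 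So some class in $\ker(\tilde\iota_t^*)\cap IH^{m-2}(W_t)$ could a priori escape both $C_{m-2}$ and the image of $\sigma^*$, and the final support argument pushing forward to $H_m(Sing(W_t))=0$ would not apply to it. What is actually needed is the injectivity of $\tilde\iota_t^*$ on all of $IH^{m-2}(W_t)$, which is precisely the Lefschetz hyperplane theorem for intersection cohomology -- the tool the paper invokes (Dimca, 5.4.6) and your argument does not. Adding that input closes the gap, at which point your proof essentially converges to the paper's.
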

\begin{proof}
By (\cite{Voisin}, p. 385, Proposition 16.23) we know that
$Im({\tilde\iota_t}^*)$ is equal to the image of the pull-back
$H^{m-2}(W_t\backslash Sing(W_t);\mathbb Q)\to
H^{m-2}(W'_t;\mathbb Q)$. On the other hand, by  (\cite{Dimca2},
p. 157 Proposition 5.4.4., and p. 158 (PD)) we have natural
isomorphisms involving intersection cohomology groups:
\begin{equation}\label{IntCohom}
H^{m-2}(W_t\backslash Sing(W_t);\mathbb Q)\cong IH^{m-2}(W_t)
\end{equation}
$$
\cong IH^{m}(W_t)^{\vee}\cong H^{m}(W_t;\mathbb Q)^{\vee}\cong
H_{m}(W_t;\mathbb Q).
$$
So we may identify the pull-back $H^{m-2}(W_t\backslash
Sing(W_t);\mathbb Q)\to H^{m-2}(W'_t;\mathbb Q)$ with $PD\circ
\,\iota_t^{\star}$. This proves that
$Im({\tilde\iota_t}^*)=Im(PD\circ \,\iota_t^{\star})$. Moreover,
since $W'_t$ is smooth, then $IH^{m-2}(W'_t)\cong
H^{m-2}(W'_t;\mathbb Q)$ (\cite{Dimca2}, p. 157). So, from
(\ref{IntCohom}), we may identify $PD\circ \,\iota_t^{\star}$ with
the natural map $IH^{m-2}(W_t)\to IH^{m-2}(W_t\cap H)$, which  is
injective in view of Lefschetz Hyperplane Theorem for intersection
cohomology (\cite{Dimca2}, p. 158 (I), and p. 159, Theorem 5.4.6)
(recall that $W'_t=W_t\cap H$).
\end{proof}

We are in position to prove Conjecture \ref{conj}.

Fix a smooth $G\in |V_{\delta}|$, and a general $X\in |V_d|$. Put
$W=G\cap X$. Since the monodromy group of the family of smooth
divisors $X\in |H^0(Y,\mathcal O_Y(d))|$ containing $W$ is a
subgroup of the monodromy group of the family of smooth divisors
$X\in |H^0(Y,\mathcal O_Y(d))|$ containing $Z$, in order to deduce
Conjecture \ref{conj} from Theorem \ref{thm2}, it suffices to
prove that  $H^m(X;\mathbb Q)_{\perp Z}^{\text{van}}=H^m(X;\mathbb
Q)_{\perp W}^{\text{van}}$. Equivalently, it suffices to prove
that  $H^m(X;\mathbb Q)_{Z}^{\text{van}}=H^m(X;\mathbb Q)_{
W}^{\text{van}}$. This is the content of the following:

\begin{proposition}
\label{induzione} For any smooth $G\in |V_{\delta}|$ and any
general $X\in |V_d|$, one has $H^m(X;\mathbb Q)_{
Z}^{\text{van}}=H^m(X;\mathbb Q)_{ W}^{\text{van}}$.
\end{proposition}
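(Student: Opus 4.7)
We proceed by induction on $m$, handling the base cases $m\le 2$ directly and, for $m>2$, applying the hyperplane-section construction of Notations~\ref{notazioni}. For $m=1$ both spaces vanish (by parity on the $Z$-side, and because $\dim W=0$ on the $W$-side). For $m=2$, Remark~\ref{note} describes the irreducible components of $W$ as $\overline{W\setminus Z}$ together with the one-dimensional components $Z^{(i)}$ of $Z$; since $[\overline{W\setminus Z}]=[W]-\sum[Z^{(i)}]$ and $[W]=([G]\cdot[X])|_X$ lies in the image of $H^2(Y)$, the equality follows modulo that image.

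For $m>2$, the inclusion $H^m(X;\mathbb Q)_Z^{\text{van}}\subseteq H^m(X;\mathbb Q)_W^{\text{van}}$ is immediate: every top-dimensional component of $Z$ lies in $W$ when $m=2\dim Z$, and otherwise the left side is zero. For the reverse inclusion, consider
\[
\Phi_t: H_m(W_t)\xrightarrow{PD\circ\iota^\star_t} H^{m-2}(W'_t)\twoheadrightarrow H^{m-2}(W'_t;\mathbb Q)^{\text{van}},
\]
whose first arrow is injective by Lemma~\ref{morfismo}, and let $I_t$ denote its image. By Notations~\ref{notazioni}(iii), $I_t$ is invariant under the monodromy of the family $\psi$. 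By induction, Conjecture~\ref{conj} holds at level $m-2$, so applied to the triple $(G',W',Z')$ (whose hypotheses are inherited from $(Y,X,Z)$ by generality of $H$) it gives irreducibility of the monodromy on $H^{m-2}(W'_t;\mathbb Q)_{\perp Z'}^{\text{van}}$. Hence $I_t\cap H^{m-2}(W'_t;\mathbb Q)_{\perp Z'}^{\text{van}}$ is either $0$ or the whole space.

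The second alternative is ruled out by a dimension count. First, $I_t\supseteq H^{m-2}(W'_t;\mathbb Q)_{Z'}^{\text{van}}$: each top-dimensional component $Z^{(i)}$ of $Z$ gives $[Z^{(i)}]\in H_m(W_t)$, and $\Phi_t([Z^{(i)}])$ is the class $[Z^{(i)}\cap H]$ modulo $H^{m-2}(G')$; by generality of $H$, these classes exhaust $H^{m-2}(W'_t;\mathbb Q)_{Z'}^{\text{van}}$. If $I_t$ additionally contained the $\perp Z'$-piece, then $\Phi_t$ would surject onto $H^{m-2}(W'_t;\mathbb Q)^{\text{van}}$. But $\dim H_m(W_t)$ does not depend on $l$, while $\dim H^{m-2}(W'_t;\mathbb Q)^{\text{van}}$ grows polynomially in $l$ (apply weak Lefschetz to $W'_t\subset W_t$ and note that $\dim H^{m-2}(G')$ is bounded). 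For $l\gg 0$ this is impossible, forcing $I_t\subseteq H^{m-2}(W'_t;\mathbb Q)_{Z'}^{\text{van}}$.

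To translate back to $H^m(X;\mathbb Q)^{\text{van}}$, given $\alpha\in H_m(W_t)$ write $\Phi_t(\alpha)=\sum c_j[Z^{(i_j)}\cap H]$ modulo $H^{m-2}(G')$ and set $\beta=\alpha-\sum c_j[Z^{(i_j)}]$. Then $PD(\iota^\star_t\beta)=i^*_{W'}\gamma'$ for some $\gamma'\in H^{m-2}(G')$, and by weak Lefschetz $\gamma'$ lifts to $\gamma\in H^{m-2}(Y)$. The Cartesian square $W'=G'\cap X'\subset Y\cap H$ and the projection formula yield $j_*(i^*_{W'}\gamma')=i^*_{X'}([G]\cdot\gamma)$, so $j^*a(\beta)=j^*(([G]\cdot\gamma)|_X)$; hence $a(\beta)-([G]\cdot\gamma)|_X\in\ker j^*=\mathrm{Im}\,j_*=L_X\cdot H^{m-2}(X)\subseteq\mathrm{Im}(H^m(Y)\to H^m(X))$ via the Lefschetz isomorphism $H^{m-2}(Y)\cong H^{m-2}(X)$. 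Therefore $a(\beta)\equiv 0$ in $H^m(X;\mathbb Q)^{\text{van}}$ and $a(\alpha)\equiv\sum c_j[Z^{(i_j)}]\in H^m(X;\mathbb Q)_Z^{\text{van}}$. The \emph{main obstacle} is making the polynomial growth estimate for $\dim H^{m-2}(W'_t;\mathbb Q)^{\text{van}}$ in $l$ rigorous, and executing the base-change/projection-formula step cleanly.
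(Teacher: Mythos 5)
Your proof follows essentially the same route as the paper's: base cases $m\le 2$ handled directly (residual-curve argument for $m=2$), then induction on $m$ via the auxiliary hyperplane $H$, using Lemma~\ref{morfismo} to identify the monodromy-invariant image $I_t$ of $H_m(W_t)$ inside the cohomology of $W'_t$, invoking the inductive irreducibility of $H^{m-2}(W'_t;\mathbb Q)_{\perp Z'}^{\text{van}}$ to obtain a dichotomy, and ruling out the surjective alternative by letting $l\to\infty$. The only cosmetic difference is that you phrase the dichotomy in the quotient $H^{m-2}(W'_t;\mathbb Q)^{\text{van}}$ while the paper runs it in the full $R^{m-2}(\psi_{|U})_*\mathbb Q$ and the sub-local-system $\mathcal J=H^{m-2}(G')\perp H^{m-2}(W')_{Z'}^{\text{van}}$; the paper's choice makes the dimension count slightly cleaner (it only needs $\dim H^{m-2}(W'_t)\to\infty$, not the vanishing part), but your version works too once you note, as you do, that $\dim H^{m-2}(G')=\dim H^{m-2}(G)$ is bounded independently of $l$ by weak Lefschetz for $G'\subset G$. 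The final ``translate back'' step is spelled out more explicitly in your version than in the paper's ``one sees that'' sentence, but the content is the same. The one loose end you flag (making the growth of $\dim H^{m-2}(W'_t)^{\text{van}}$ rigorous) is standard and not a genuine gap.
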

\begin{proof}
First we analyze the cases $m=1$ and $m=2$, and next we argue by
induction on $m>2$ (recall that $dim\,Y=m+1$).

The case $m=1$ is trivial because in this case $dim\,Z\leq dim
\,W=0$.

Next assume $m=2$. In this case $dim \,Y=3$ and $dim \,Z\leq 1$.
Denote by $Z_1,\dots,Z_h$ ($h\geq 0$) the irreducible components
of $Z$ of dimension $1$ (if there are). Fix a smooth $G\in
|V_{\delta}|$ and a general $X\in |V_d|$, and put $W=G\cap
X=Z_1\cup\dots\cup Z_h\cup C$, where $C$ is the residual curve,
with respect to $Z_1\cup\dots\cup Z_h$, in the complete
intersection $W$. By Remark \ref{note} we know that $C$ is
irreducible. Then, as (co)cycle classes, $Z_1,\dots,Z_h, C$
generate $H^2(X;\mathbb Q)_{ W}^{\text{van}}$, and $Z_1,\dots,Z_h$
generate $H^2(X;\mathbb Q)_{ Z}^{\text{van}}$. Since
$Z_1+\dots+Z_h+ C=\delta H_X$ in $H^2(X;\mathbb Q)$ ($H_X$=
general hyperplane section of $X$ in $\Ps^N$), and this cycle
comes from $H^2(Y;\mathbb Q)$, then $Z_1+\dots+Z_h+ C=0$ in
$H^2(X;\mathbb Q)^{\text{van}}$, and so $H^2(X;\mathbb Q)_{
Z}^{\text{van}}=H^2(X;\mathbb Q)_{ W}^{\text{van}}$. This
concludes the proof of Proposition \ref{induzione} in the case
$m=2$.

Now assume $m>2$ and argue by induction on $m$. First we observe
that the intersection pairing on $H^{m-2}(W';\mathbb Q)_{
Z'}^{\text{van}}$ is non-degenerate: this follows from Hodge Index
Theorem, because the cycles in $H^{m-2}(W';\mathbb Q)_{
Z'}^{\text{van}}$ are primitive and algebraic. So we have the
following orthogonal decomposition:
\begin{equation}\label{orthogonaldecomp}
H^{m-2}(W';\mathbb Q)=H^{m-2}(G';\mathbb Q)\perp
H^{m-2}(W';\mathbb Q)_{ Z'}^{\text{van}}\perp H^{m-2}(W';\mathbb
Q)_{\perp Z'}^{\text{van}}.
\end{equation}
Let $\mathcal J$ be the local system on $U$ with fibre given by
$H^{m-2}(G';\mathbb Q)$ $\perp$ $H^{m-2}(W';\mathbb
Q)_{Z'}^{\text{van}}$. We claim that:
\begin{equation}\label{gei}
Im(\tilde\iota ^*)=\mathcal J.
\end{equation}

We will prove (\ref{gei}) shortly after. From (\ref{gei}) and
Lemma \ref{morfismo} we get an isomorphism: $ H_{m}(W;\mathbb
Q)\cong H^{m-2}(G';\mathbb Q)\perp H^{m-2}(W';\mathbb Q)_{
Z'}^{\text{van}}. $ Taking into account that by Lefschetz
Hyperplane Theorem we have $H^{m-2}(Y;\mathbb Q)\cong
H^{m-2}(G;\mathbb Q)\cong H^{m-2}(G';\mathbb Q)$, and that the
Gysin map $H_{m}(Z;\mathbb Q)$ $\to$ $H_{m-2}(Z';\mathbb Q)$ is
bijective (because $H_{m}(Z;\mathbb Q)$ and $H_{m-2}(Z';\mathbb
Q)$ are simply generated by the components which are of dimension
$m$ or $m-2$ of $Z$ and $Z'$ (if there are)), one sees that the
natural map $H_m(W;\mathbb Q)\to H_m(X;\mathbb Q)\cong
H^m(X;\mathbb Q)$ sends $H^{m-2}(G';\mathbb Q)$ in
$H^{m}(Y;\mathbb Q)$, and $H^{m-2}(W';\mathbb Q)_{
Z'}^{\text{van}}$ in $H^{m}(X;\mathbb Q)_{ Z}^{\text{van}}$. This
proves $H^m(X;\mathbb Q)_{ Z}^{\text{van}}$ $\supseteq$
$H^m(X;\mathbb Q)_{ W}^{\text{van}}$. Since the reverse inclusion
is obvious, it follows that $H^m(X;\mathbb Q)_{ Z}^{\text{van}}$
$= H^m(X;\mathbb Q)_{ W}^{\text{van}}$.

So, to conclude the proof of Proposition \ref{induzione}, it
remains to prove claim (\ref{gei}). To this purpose first notice
that $Im({\tilde\iota_t}^*)$ contains $H^{m-2}(W'_t;\mathbb
Q)_{Z'}^{\text{van}}$, because, by Lemma \ref{morfismo}, we have
$Im({\tilde\iota_t}^*)=Im(PD\circ \,\iota_t^{\star})$, and
$Im(PD\circ \,\iota_t^{\star})\supseteq H^{m-2}(W'_t;\mathbb
Q)_{Z'}^{\text{van}}$ in view of the quoted isomorphism
$H_{m}(Z;\mathbb Q)\cong H_{m-2}(Z';\mathbb Q)$. Moreover
$Im({\tilde\iota_t}^*)$ contains $H^{m-2}(G';\mathbb Q)$ because
$H^{m-2}(G';\mathbb Q)\cong H^{m-2}(G;\mathbb Q)$, and
$H^{m-2}(G;\mathbb Q)$ is contained in $Im({\tilde\iota_t}^*)$.
Therefore we obtain $Im({\tilde\iota}^*)\supseteq \mathcal J$,
from which we deduce that $Im({\tilde\iota}^*)= \mathcal J$. In
fact, otherwise, since by induction $H^{m-2}(W'_t;\mathbb
Q)_{\perp Z'}^{\text{van}}$ is irreducible, from
(\ref{orthogonaldecomp}) it would follow that
$Im({\tilde\iota}^*)=R^{m-2}({\psi_{|U}})_{*}\mathbb Q$. This is
impossible because for $l\gg 0$ the dimension of
$H^{m-2}(W'_t;\mathbb Q)$ is arbitrarily large (by the way, we
notice that the same argument proves that $\mathcal J$ is nothing
but the invariant part of $R^{m-2}({\psi_{|U}})_{*}\mathbb Q$).
\end{proof}

\section{A Monodromy Theorem}

In this section we  prove a monodromy theorem (see Theorem
\ref{ngeneralfact} below), which we will use in next section for
proving Theorem \ref{thm2}, and that we think of independent
interest.

Let $Q\subseteq\Ps$ be an irreducible, reduced, non-degenerate
projective variety of dimension $m+1$ ($m\geq 0$), with isolated
singular points $q_1,\dots,q_r$. Let $L\in \bG(1,\Ps^*)$ be a
general pencil of hyperplane sections of $Q$, and denote by $Q_L$
the blowing-up of $Q$ along the base locus of $L$, and by $f:
Q_L\to L$ the natural map. The ramification locus of $f$ is a
finite set $\{q_1,\dots,q_s\}:=Sing(Q)\cup\{q_{r+1},\dots,q_s\}$,
where $\{ q_{r+1}, \dots ,q_s \}$ denotes the set of tangencies of
the pencil. Set $a_i:=f(q_i)$, $1\leq i\leq s$ (compare with
\cite{Steenbrink}, p. 304). The restriction map $f: Q_L \backslash
f^{-1}(\{ a_1, \dots ,a_s \}) \to L\backslash\{ a_1, \dots ,a_s
\}$ is a smooth proper map. Hence the fundamental group
$\pi_1(L\backslash\{ a_1, \dots ,a_s \}, t)$ ($t=$ general point
of $L$) acts by monodromy on $Q_t:=f^{-1}(t)$, and so on
$H^{m}(Q_t; \mathbb{Q})$. By \cite{PS}, p. 165-167, we know that
$f: Q_L \backslash f^{-1}(\{ a_1, \dots ,a_s \}) \to L\backslash\{
a_1, \dots ,a_s \}$ induces an orthogonal decomposition:
$H^{m}(Q_t; \mathbb{Q})=I\perp V$, where $I$ is the subspace of
the invariant cocycles, and $V$ is its orthogonal complement.

In the case $Q$ is smooth, a classical basic result in Lefschetz
Theory  states that $V$ is generated by {\lq\lq{standard vanishing
cycles}\rq\rq} (i.e. by vanishing cycles corresponding to the
tangencies of the pencil). This implies the irreducibility of $V$
by standard classical reasonings (\cite{La}, \cite{Voisin}). Now
we are going to prove that it holds true also when $Q$ has
isolated singularities. This is the content of the following
Theorem \ref{ngeneralfact}, for which we didn't succeed in finding
an appropriate reference (for a related and somewhat more precise
statement, see Proposition \ref{ntrivial} below).

\begin{theorem}
\label{ngeneralfact} Let $Q\subseteq\Ps$ be an irreducible,
reduced, non-degenerate projective variety of dimension $m+1\geq
1$, with isolated singularities, and $Q_t$ be a general hyperplane
section of $Q$. Let $H^{m}(Q_t; \mathbb{Q})=I\perp V$ be the
orthogonal decomposition given by the monodromy action on the
cohomology of $Q_t$, where $I$ denotes the invariant subspace.
Then $V$ is generated, via monodromy, by standard vanishing
cycles.
\end{theorem}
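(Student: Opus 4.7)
The plan is to extend the classical Lefschetz--Picard--Lefschetz theory by isolating the extra local monodromies coming from the singular points of $Q$. Let $V_s\subseteq V$ be the smallest monodromy-invariant subspace containing the standard vanishing cycles $\delta_{r+1},\dots,\delta_s$ at the tangencies $a_j$ ($j>r$). By Hodge--Riemann, the intersection form on $V$ is non-degenerate, so there is an orthogonal, monodromy-invariant decomposition $V=V_s\perp W$, and the goal reduces to showing $W=0$. The classical Picard--Lefschetz formula at each tangency $a_j$ gives $T_j(v)=v\pm\langle v,\delta_j\rangle\delta_j$; since $\langle w,\delta_j\rangle=0$ for $w\in W$, each such $T_j$ fixes $W$ pointwise. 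Consequently, the monodromy action on $W$ factors through the subgroup generated by the local monodromies $T_1,\dots,T_r$ around the singular values $a_i$ associated to the singular points $q_i\in\mathrm{Sing}(Q)$.

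The heart of the proof is then to show that each $T_i$ ($i\leq r$) also acts trivially on $W$. I would localize at $q_i$ via a Milnor ball $B_i$ and a small disc $D_i\subseteq L$ around $a_i$: the restriction $f\colon B_i\cap Q_L\to D_i$ is the Milnor fibration of a generic linear function on the germ $(Q,q_i)$, and $T_i$ is its Milnor monodromy on $H^m(Q_t;\mathbb{Q})$. The key claim is that the image of $T_i-\mathrm{id}$ -- the local vanishing subspace at $a_i$ -- is contained in $V_s$. The strategy is to produce, in the local vanishing cohomology at $q_i$, a generating set of cycles that arise as monodromy translates of standard vanishing cycles. A natural candidate is a morsification of the linear function at $q_i$: it replaces the singular section by a cluster of sections with ordinary tangencies very close to $q_i$, and these produce standard vanishing cycles in a perturbed fibration, which by semicontinuity of the monodromy representation already lie in $V_s$. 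Thus $T_i$ preserves $V_s$ and, by the Picard--Lefschetz formula applied to the new standard vanishing cycles, fixes $W$ pointwise; combining with the first paragraph, this forces $W\subseteq I\cap V=0$.

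The main obstacle is the local-to-global step: the hyperplane of the pencil through $q_i$ cannot be separated from $q_i$ by moving the pencil alone, so the morsification really requires leaving the family of linear pencils on $Q$. One should close the gap either by a Zariski--van Kampen analysis of $\pi_1(\mathbb{P}^{N\ast}\setminus\mathcal{D})$, whose irreducible components are the dual variety $Q^\ast_{\mathrm{sm}}$ and, for each $q_i$, the hyperplane $\mathcal{H}_{q_i}^\ast$ parametrizing hyperplanes through $q_i$ -- showing that loops around $\mathcal{H}_{q_i}^\ast$ act on $V$ through products of standard Picard--Lefschetz reflections -- or by a direct use of nearby and vanishing cycle functors combined with Steenbrink-type limit mixed Hodge structure techniques, which should express $T_i-\mathrm{id}$ in terms of classes orthogonal to $W$ by construction.
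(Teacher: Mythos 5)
Your overall skeleton is sound and mirrors the paper's: decompose $V=V_s\perp W$ with $V_s$ the monodromy span of the standard vanishing cycles, note via Picard--Lefschetz that the tangency monodromies $T_j$ ($j>r$) act as the identity on $W$ because $W\perp V_s$, and reduce to showing each singular-point monodromy $T_i$ ($i\le r$) also acts trivially on $W$. That reduction is airtight: $W=V_s^\perp\cap V$ is monodromy-invariant because $V_s$ and the pairing are, so it suffices to show $(T_i-\mathrm{id})W\subseteq V_s$, since $V_s\cap W=0$.

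The gap you flag, though, is genuine and not closed by what you write. Morsifying the germ of the linear function at $q_i$ replaces the hyperplane by a nearby \emph{nonlinear} function; the resulting Morse vanishing cycles are vanishing cycles of a Milnor fibration that simply does not arise from a pencil of hyperplane sections of $Q$. There is no family of pencils in which this degeneration sits, because every hyperplane through the singular point $q_i$ gives a singular section, so you cannot push $q_i$ off the axis. "Semicontinuity of the monodromy representation" is therefore not an available tool: the perturbed vanishing cycles have no a priori relation to $V_s$, which is defined using the \emph{global} pencil and its monodromy orbit. The two escape routes you mention at the end -- Zariski--van Kampen on $\pi_1(\mathbb{P}^{N*}\setminus\dc)$, or a nearby/vanishing-cycles argument -- are not developed and are where all the real work lies.

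For comparison, the route the paper takes is indeed along your Zariski--van Kampen suggestion, but the essential technical input is geometric, not formal: working over the whole dual space $\Ps^*$, one sets $T$ to be the local subsystem generated by the vanishing cocycle at a general point of the dual hypersurface $Q^*$ (so $T$ is, up to identification, your $V_s$), and then proves (Proposition \ref{ntrivial}) that the monodromy is \emph{trivial} on $H^m(Q_t;\mathbb Q)/T$. The heart of that proof is a uniform Milnor-ball estimate (Lemma \ref{nWhitney}): along a general line $\ell\subseteq\hc_i$ and away from small discs around $\ell\cap Q^*$, one can choose a single ball $D_{q_i}$ so that the local fibration $\pi^{-1}(\mathcal K)\cap D_{q_i}\to\mathcal K$ is a Milnor fibration with discriminant on $\hc_i$ only; gluing these with larger balls near the finitely many points of $\ell\cap\bigcup_{j\ne i}\hc_j$, the local system $(\mathcal M_i+T)/T$ extends over a whole neighborhood of $\ell$ in $\Ps^*$, forcing its monodromy to be trivial. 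Combined with $V=V_1+\dots+V_s$ and $V_i\subseteq M_i$, this gives trivial monodromy on $H^m/T$, hence $V=T$. None of this compactness/extension machinery appears in your sketch, and it is precisely what replaces the failed morsification step; without it your argument does not go through.

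Two smaller remarks. First, your opening paragraph implicitly assumes $Q^*$ is a hypersurface (otherwise a general pencil has no tangencies, $V_s=0$, and one must prove $V=0$ directly); the paper handles this and the cone case separately. Second, the inclusion $\mathrm{Im}(T_i-\mathrm{id})\subseteq M_i$ (the image of the local Milnor cohomology) is itself not entirely obvious when $Q$ is singular and is proved in the paper's Appendix (property (\ref{nvanishing})); you would also need a version of this for your argument to parse.
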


\begin{remark}\label{notesugenfact} (i) For a particular case of
Theorem \ref{ngeneralfact}, see \cite{Steenbrink}, Theorem (2.2).

(ii) When $Q$ is a curve, i.e. when $m=0$, then Theorem
\ref{ngeneralfact} follows from the well known fact that the
monodromy group is the full symmetric group (see \cite{ACGH}, pg.
111). So we assume from now on that $m\geq 1$.

(iii) When $Q$ is a cone over a degenerate and necessarily smooth
subvariety of $\Ps$, then $f: Q_L\to L$ has only one singular
fiber $f^{-1}(a_1)$ (i.e. $s=1$). In this case
$\pi_1(L\backslash\{ a_1\}, t)$ is trivial. Therefore we have that
$H^{m}(Q_t; \mathbb{Q})=I$, $V=0$, and Theorem \ref{ngeneralfact}
follows.
\end{remark}

Before proving Theorem \ref{ngeneralfact}, we need some
preliminaries. We keep the same notation we introduced before.

\begin{notations}\label{naltrenotazioni}
(i) Let $R_L\to Q_L$ be a desingularization of $Q_L$. The
decomposition $H^{m}(Q_t; \mathbb{Q})=I\perp V$ can be interpreted
via ${R}_L$ as $I=j^*(H^{m} ({R}_L ;\mathbb{Q}))$ and
$V=Ker(H^{m}(Q_t ; \mathbb{Q})\to H^{m+2} ( {R}_L ; \mathbb{Q}))
\cong Ker(H_{m}(Q_t ; \mathbb{Q})\to H_{m}( {R}_L ; \mathbb{Q}))$,
where $j$ denotes the inclusion $Q_t\subset
 R_L$. Using standard arguments (compare with
\cite{Voisin}, p. 325, Corollaire 14.23) one deduces a natural
isomorphism:
\begin{equation}\label{ndecon}
 \quad V\cong Im(H_{m+1}( {R}_L-g^{-1}(t_1),Q_t ;
\mathbb{Q})\to H_{m}(Q_t ;\mathbb{Q})),
\end{equation}
where $g:R_L\to L$ denotes the composition of $R_L\to Q_L$ with
$f: Q_L\to L$, and $t_1\neq t$ another regular value of $g$.

(ii) For any critical value $a_i$ of $L$ fix a closed disk
$\Delta_i\subset L\backslash\{t_1\} \cong \bC$ with center $a_i$
and radius $0<\rho \ll 1$. As in \cite{La}, (5.3.1) and (5.3.2),
one may prove that $H_{m+1}(R_L -g^{-1}(t_1),Q_t ;
\mathbb{Q})\cong{\oplus}_{i=1}^s H_{m+1}
(g^{-1}(\Delta_i),g^{-1}({a_i+\rho}) ; \mathbb{Q})$. By
(\ref{ndecon}) we have:
\begin{equation}\label{ndecomp}
V=V_1+\dots+V_s,
\end{equation}
where we denote by $V_i$ the image in $H^{m}(Q_t ;\mathbb{Q})$
$\cong H_{m} (g^{-1}({a_i+\rho}); \mathbb{Q})$ of each $H_{m+1}
(g^{-1}(\Delta_i), g^{-1}({a_i+\rho});\mathbb{Q})$. When $r+1\leq
i\leq s$, we recognize in $V_i\subseteq H^{m}(Q_t;\mathbb{Q})$ the
subspace generated by the standard vanishing cocycle $\delta_i$
corresponding to a tangent hyperplane section of $Q$ (see
\cite{La}, \cite{Voisin}, \cite{Steenbrink}).

(iii) Consider again the pencil $f: Q_L\to L$, and let $\Ps_L$ be
the blowing up of $\Ps$ along the base locus $B_L$. For any
$i\in\{1,\dots,s\}$, denote by $D_i\subset\Ps_L$ a closed ball
with center $q_i$ and small radius $\epsilon$. Define $M_i
:=Im(H_{m}(f^{-1}(a_i+\rho)\cap D_i; \mathbb{Q}) \to
H_{m}(f^{-1}(a_i+\rho);\mathbb{Q}))$, with $0<\rho \ll \epsilon\ll
1$. Since $H_{m}(f^{-1}(a_i+\rho);\mathbb{Q})\cong H_{m}(Q_t;
\mathbb{Q})\cong H^{m}(Q_t; \mathbb{Q})$, we may regard
$M_i\subseteq H^{m}(Q_t; \mathbb{Q})$.  When $1\leq i\leq r$,
$M_i$ represents the subspace spanned by the cocycles {\lq\lq
{coming}\rq\rq}  from the singularities of $Q$, and lying in the
Milnor fibre $f^{-1}(a_i+\rho)\cap D_i$. When $r+1\leq i\leq s$,
i.e. when $a_i$ corresponds to  a tangent hyperplane section of
$Q$, then $V_i=M_i$. In general we have:
\begin{equation}
\label{nvanishing} V_i\subseteq M_i \quad{\text{\it{for any
\,$i=1,\dots,s$}}}.
\end{equation}
This is a standard fact, that one may prove as in (\cite{Loj},
(7.13) Proposition). For Reader's convenience, we give  the proof
of property (\ref{nvanishing})  in the Appendix, at the end of the
paper.
\end{notations}

Now we are going to prove Theorem \ref{ngeneralfact}

\begin{proof}[Proof of Theorem \ref{ngeneralfact}]
Let $\pi:\fc \to \Ps^*$ ($\fc \subseteq { \Ps}^*\times \Ps$) be
the universal family parametrizing the hyperplane sections of
$Q\subseteq\Ps$, and denote by $\dc\subseteq\Ps^*$ the
discriminant locus of $\pi$, i.e. the set of hyperplanes
$H\in\Ps^*$ such that $Q\cap H$ is singular. At least
set-theoretically, we have $\dc=Q^*\cup\hc_1\cup\dots\cup\hc_r$,
where $Q^*$ denotes the dual variety of $Q$, and $\hc_j$ denotes
the dual hyperplane of $q_j$ (compare with \cite{Steenbrink}, p.
303).

When the codimension of $Q^*$ in $\Ps^*$ is $1$,  denote by $T_t$
the stalk at $t\in {\Ps^*\backslash\dc}$ of the local subsystem of
$R^m(\pi|_{\pi^{-1}({\Ps^*\backslash\dc})})_*\mathbb{Q}$ generated
by the vanishing cocycle at general point of $Q^*$ (compare with
\cite{OS}, p. 373, or \cite{Steenbrink}, p. 306). If the
codimension of $Q^*$ in $\Ps^*$ is $\geq 2$, put $T_t:=\{0\}$. In
order to prove Theorem \ref{ngeneralfact} it suffices to prove
that $V=T$  ($T:=T_t$). By Deligne Complete Reducibility Theorem
(\cite{PS}, p. 167), we may write $H^{m}(Q_t;\mathbb{Q})$
$=W\oplus T$, for a suitable invariant subspace $W$. Now we claim
the following proposition, which we will prove below:
\begin{proposition}
\label{ntrivial} The monodromy representation on  the quotient
local system with stalk $H^{m}(Q_t; \mathbb{Q})/T_t$ at $t\in
{\Ps^*\backslash\dc}$ is trivial.
\end{proposition}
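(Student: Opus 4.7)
The plan is to show that, for every irreducible component $S$ of the discriminant $\dc=Q^*\cup\hc_1\cup\cdots\cup\hc_r$, a small loop around a general point of $S$ acts trivially on $H^{m}(Q_t;\bQ)/T_t$. By standard Zariski--Lefschetz type results, $\pi_1(\Ps^*\backslash\dc)$ is normally generated by such meridians, so this local analysis suffices.

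For a loop around a general smooth point of $Q^*$ (with the convention that nothing is to do here if $Q^*$ has codimension $\geq 2$ in $\Ps^*$, since then $T_t=0$ and the cone-type situation of Remark \ref{notesugenfact}(iii) governs the statement), the Picard--Lefschetz formula yields a reflection $\alpha\mapsto\alpha\pm\langle\alpha,\delta\rangle\delta$ with $\delta$ a standard vanishing cocycle at the general point of $Q^*$. Since $\delta\in T_t$ by the very definition of $T$, this reflection descends to the identity on the quotient.

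For a loop around a general point $H_0\in\hc_j$, I would choose a small complex disk $\Delta\subset\Ps^*$ transverse to $\hc_j$ at $H_0$ and disjoint from the other components of $\dc$; this is possible because $Q^*\cap\hc_j$ and $\hc_j\cap\hc_i$ are proper subvarieties of $\hc_j$. The only singularity of $Q\cap H_0$ is then at $q_j$, and by the standard Milnor-fibration picture the local monodromy $\gamma_j$ along $\partial\Delta$ coincides with the Milnor monodromy of this isolated singularity. Since the Milnor monodromy acts as the identity outside the local vanishing cohomology $M_j$ of \ref{naltrenotazioni}(iii), one has $(\gamma_j-\mathrm{id})H^{m}(Q_t;\bQ)\subseteq M_j$, and the proposition reduces to proving the inclusion $M_j\subseteq T_t$.

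To establish $M_j\subseteq T_t$, I would realise $M_j$ as being spanned by standard Picard--Lefschetz vanishing cycles coming from tangent hyperplane sections of $Q$ clustering at $H_0$. The germ at $H_0$ of the family of hyperplane sections of $Q$ is a deformation of the isolated singularity of $Q\cap H_0$ at $q_j$, classified by a holomorphic map from $(\Ps^*,H_0)$ to the base of a miniversal unfolding which pulls the miniversal discriminant back to the germ of $\dc$. Choosing a suitable slice through $H_0$ whose image is transverse to the miniversal discriminant, one exhibits $\mu_j$ nearby points of $Q^*$ accumulating at $H_0$; their Picard--Lefschetz vanishing cycles generate $M_j$ (they span the vanishing cohomology of a generic smoothing of the central singularity) and lie in $T_t$ (all vanishing cycles at smooth points of the irreducible variety $Q^*$ are $\pi_1$-conjugate and therefore belong to the local subsystem $T$). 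Hence $\gamma_j$ acts trivially on $H^{m}(Q_t;\bQ)/T_t$. The main obstacle is carrying out this local deformation argument rigorously, in particular the identification of the miniversal discriminant with the germ of $\dc$ at $H_0$ and the verification that the $\mu_j$ nearby tangent hyperplane sections of $Q$ truly lie on $Q^*$.
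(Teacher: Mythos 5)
Your global strategy (pass to meridians around the components of $\dc$; dispose of the $Q^*$-meridian at once via Picard--Lefschetz and the definition of $T$) agrees with the paper, and your observation that the $\hc_j$-meridian $\gamma_j$ satisfies $(\gamma_j-\mathrm{id})H^m(Q_t;\bQ)\subseteq M_j$ is correct. The divergence, and the gap, is in the treatment of the $\hc_j$-meridian.

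First, you propose to prove the strictly stronger inclusion $M_j\subseteq T_t$. The paper deliberately does \emph{not} claim this; it only shows that the monodromy acts trivially on the quotient $(\mathcal M_{i,t}+T_t)/T_t$, which is what the proposition needs and is in general weaker than $\mathcal M_{i,t}\subseteq T_t$ (the Milnor fibre may well carry classes that survive to the invariant part $I$ rather than to $T$). Overshooting is not by itself fatal, but it leads you into an argument that cannot work.

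The decisive obstruction is your step ``one exhibits $\mu_j$ nearby points of $Q^*$ accumulating at $H_0$''. Since $Q$ is not a cone (the cone case having been set aside), Remark \ref{cono} gives $Q^*\not\supseteq\hc_j$, and by symmetry $\hc_j\not\subseteq Q^*$, so $Q^*\cap\hc_j$ is a proper closed subvariety of $\hc_j$. A \emph{general} point $H_0\in\hc_j$ therefore lies at positive distance from the closed set $Q^*$, and no points of $Q^*$ whatsoever accumulate at $H_0$: any sufficiently small slice through $H_0$ is disjoint from $Q^*$. Correspondingly, the classifying map $\Phi$ from a neighbourhood of $H_0$ to the base of the miniversal deformation of $(Q\cap H_0,q_j)$ sends the whole hyperplane $\hc_j$ into the positive--codimension, equisingular ($\mu$-constant) stratum of the miniversal discriminant; no slice through $H_0$ can have image transverse to that discriminant, and a generic one-parameter path off $\hc_j$ smooths the singularity ``all at once'' rather than through a Morsification into $\mu_j$ ordinary double points lying on $Q^*$. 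Hence the vanishing cycles that generate $M_j$ cannot be produced as tangential ($Q^*$) vanishing cycles by any purely local deformation at $H_0$, and the inclusion $M_j\subseteq T_t$ is left entirely unproved. You yourself flag this as ``the main obstacle''; in fact it is not a technicality to be cleaned up but a point where the local picture simply does not contain what you need.

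The paper avoids this precisely by refusing to stay local on $\hc_i$: it works over a \emph{general line} $\ell\subseteq\hc_i$, builds the Milnor local system $\mathcal M_i$ over a tubular neighbourhood $\mathcal K$ of the compact part $K$ of $\ell$, and then extends the quotient local system $(\mathcal M_i+T)/T$ across the finitely many points $\ell\cap Q^*$ and $\ell\cap\hc_{j'}$ using the larger Milnor balls $D^{(a)}_{q_i}$. The tangencies contributing to $T$ are encountered by travelling along $\ell$ inside $\hc_i$ to reach $\ell\cap Q^*$, not by shrinking a neighbourhood of a single general point of $\hc_i$. This semi-global step --- together with the resulting triviality of $(\mathcal M_i+T)/T$, combined with $(\ref{ndecomp})$ and $(\ref{nvanishing})$ --- is the actual content of the proof, and it is exactly what is missing from your proposal.
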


By previous Proposition \ref{ntrivial} it follows that for any $g$
$\in$ $\pi_1(L\backslash\{ a_1, \dots ,a_s \}, t)$ and any $w\in
W$ there exists $\tau\in T$ such that $w^g=w+\tau$. Then
$\tau=w^g-w\in T\cap W=\{0\}$, and so $w^g=w$. Therefore $W$ is
invariant, i.e. $W\subseteq I$, and since $T\subseteq V$ and
$H^{m}(Q_t;\mathbb{Q})=I\oplus V=W\oplus T$, then we have $T=V$.
\end{proof}

It remains to prove Proposition \ref{ntrivial}. To this aim, we
need some preliminaries. We keep the same notation we introduced
before.

Consider again the universal family $\pi:\fc \to \Ps^*$
parametrizing the hyperplane sections of $Q\subseteq\Ps$. We will
denote by $H_x$ the hyperplane parametrized by $x\in\Ps^*$. Fix a
point $q_i\in Sing(Q)$ (hence $i\in\{1,\dots,r\}$). For general
$L$, $q_i$ is not a base point of the pencil defined by $L$, hence
$Q_L\cong Q$ over $q_i$. Combined with the inclusion $Q_L\subseteq
\fc$, we thus have a natural lift of $q_i$ to a point of $\fc$,
still denoted by $q_i$.

\begin{remark}\label{cono}
If $Q^*$ is contained in $\hc_j$ for some $j\in\{1,\dots,r\}$,
then $Q^*$ is degenerate in $\Ps^*$, and so $Q=Q^{**}$ is a cone
in $\Ps$. Therefore, if $Q$ is not a cone, then $Q^*$ is not
contained in $\hc_j$ for any $j\in\{1,\dots,r\}$. In this case,
for a general line $\ell \subseteq\hc_i$, the set $\ell\cap Q^*$
is finite, and  for any $x\in \ell$, $H_x\cap Q$ has an isolated
singularity at $q_i$.
\end{remark}

\begin{notations} (i) Let $\ell \subseteq\hc_i$ be a general line.
For any $u\in\ell\cap Q^*$, denote by $\Delta_{u}^{\circ}$ an open
disk of $\ell$ with center $u$ and small radius. Consider the
compact $K:=\ell\backslash (\bigcup_{u\in{\ell\cap
Q^*}}\Delta_{u}^{\circ})$. In the Appendix below (see Lemma
\ref{nWhitney}) we prove that {\it{there is a closed ball
$D_{q_i}\subseteq { \Ps}^*\times \Ps$, with positive radius and
centered at $q_i$, such that for any $x\in K$ the distance
function $p\in H_x\cap Q\cap D_{q_i}\to ||p-q_i||\in\mathbb{R}$
has no critical points $p\neq q_i$}} (we already proved a similar
result in \cite{DGF}, Lemma 3.4, (v)). By (\cite{Loj}, pp. 21-28)
it follows that for any $x\in K$ there is a closed ball $C_x
\subseteq \Ps^*$ centered at $x$, for which the induced map
$z\in\pi^{-1}(C_x)\cap D_{q_i}\to\pi(z)\in C_x$ is a Milnor
fibration, with discriminant locus given by $\hc_i\cap{C_x}$.
Since $K$ is compact, we may cover it with finitely many of such
$C_x$'s. So we deduce the existence of a connected closed tubular
neighborhood $\mathcal K$ of $K$ in $\Ps^*$, such that the map:
\begin{equation} \label{nvarphi}
\pi_{\mathcal K}: z\in\pi^{-1}(\mathcal K)\cap D_{q_i}
\to\pi(z)\in \mathcal K
\end{equation}
defines a  $C^{\infty}$-fiber bundle on $\mathcal K
\backslash\hc_i$, and whose fibre $\pi_{\mathcal
K}^{-1}(t)=H_t\cap Q\cap D_{q_i}$, $t\in \mathcal K
\backslash\hc_i$, may be identified with the Milnor fibre.

(ii) Let $\mc_i$ be the local system with fibre $\mc_{i,t}$ at
$t\in \mathcal K\backslash\dc$ given by the image of
$H_{m}(H_t\cap Q\cap D_{q_i}; \mathbb{Q})$ in $H_{m}(H_t\cap Q
;\mathbb{Q})\cong H^{m}(Q_t; \mathbb{Q})$. Notice that, for any
general pencil $L\in \bG(1,\Ps^*)$,  the local system $\mathcal
M_i$  extends, as a local system, $M_i$ on all $L\cap (\mathcal
K\backslash \dc)$ (compare with Notations \ref{naltrenotazioni},
(iii)). In particular  we may assume $M_i=\mc_{i,t}$.
\end{notations}

We are in position to prove Proposition \ref{ntrivial}. We keep
the same notation we introduced before.

\begin{proof}[Proof of Proposition \ref{ntrivial}]

As in (\cite{Steenbrink}, proof of Theorem (2.2)), we need to
consider only the action of $\pi_1(\Ps^*\backslash (\bigcup_{1\leq
j\leq r}\hc_j),t)$.

Consider the finite set $A:=\ell\cap(\bigcup_{j\neq i}\hc_j)$, and
let $a\in A$ be a point. In view of Remark \ref{notesugenfact},
(iii), and Remark \ref{cono}, we may assume that $H_a\cap Q$ has
an isolated singularity at $q_i$. Notice that, a priori, it may
happen that $a\in\ell\cap Q^*$ and so $a\notin K$. But in any
case, since $H_a\cap Q$ has an isolated singularity at $q_i$, as
before, for any $a\in A$ we may construct a closed ball
$D^{(a)}_{q_i}\subseteq {\Ps}^*\times \Ps$, with positive radius
and centered at $q_i$, and a closed ball $C_{a} \subseteq \Ps^*$
centered at $a$, for which the induced map
\begin{equation} \label{nivj}
z\in\pi^{-1}(C_{a})\cap D^{(a)}_{q_i}\to\pi(z)\in C_{a}
\end{equation}
is a Milnor fibration with discriminant locus contained in
$\hc_i\cup Q^*$.  We may assume $D_{q_i}\subseteq D^{(a)}_{q_i}$
for any $a\in A$,  and, shrinking the disks $\Delta_{u}^{\circ}$
($u\in\ell\cap Q^*$) if necessary, we may also assume that the
interior $\mathcal K^{\circ}$ of $\mathcal K$ meets the interior
$C^{\circ}_{a}$ of each $C_{a}$. Therefore, in $(\mathcal
K^{\circ}\cap C^{\circ}_{a})\backslash (\hc_i\cup Q^*)$, the
bundle (\ref{nvarphi}) appears as a subbundle of (\ref{nivj}).

Observe that the image in $H^{m}(Q_t; \mathbb{Q})/T_t$ of the
cohomology of (\ref{nivj}) coincides with $({\mathcal
M_{i,t}+T_t})/{T_t}$ on $(\mathcal K^{\circ}\cap
C^{\circ}_{a})\backslash (\hc_i\cup Q^*)$. This implies that, in a
suitable small analytic neighborhood $\mathcal L$ of $\ell$ in
$\Ps^*$,   the quotient local system $({\mathcal
M_{i,t}+T_t})/{T_t}$ extends on all $\mathcal L\backslash\dc$.
Taking into account Picard-Lefschetz formula, and that the
discriminant locus of (\ref{nivj}) is contained in $\hc_i\cup
Q^*$, we have that $\pi_1(\Ps^*\backslash\dc,t)$ acts trivially on
$({\mathcal M_{i,t}+T_t})/{T_t}$. This holds true for any
$i\in\{1,\cdots,r\}$. Hence, in view of (\ref{ndecomp}) and
(\ref{nvanishing}), it follows that the monodromy action is
trivial on $H^{m}(Q_t;\mathbb{Q})/T_t$. This concludes the proof
of Proposition \ref{ntrivial}.
\end{proof}

By standard classical reasonings as in \cite{La} or \cite{Voisin},
from Theorem \ref{ngeneralfact} we deduce the following:

\begin{corollary}
\label{corollario} $V$ is irreducible.
\end{corollary}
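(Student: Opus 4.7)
The plan is to deduce the irreducibility of $V$ from Theorem \ref{ngeneralfact} via the classical Picard--Lefschetz argument of \cite{La,Voisin}, which becomes available in our singular setting thanks to that theorem. The strategy proceeds in three steps.

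First, I would invoke Theorem \ref{ngeneralfact} to identify $V$ with the $\bQ$-span of the monodromy orbit of the standard vanishing cocycles $\delta_{r+1},\dots,\delta_s$ associated to the tangencies of the general pencil $L$. This is the crucial input that makes a Lefschetz-type argument available even though $Q$ carries isolated singularities.

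Second, I would show that all such $\delta_i$ lie in a single orbit of the monodromy action of $\pi_1(\Ps^*\setminus\dc,t)$. Since $Q$ is irreducible, so is its dual variety $Q^*\subseteq\Ps^*$; hence the tangency points in $L\cap Q^*$ can be continuously permuted by deforming $L$ inside $\bG(1,\Ps^*)$ through the smooth locus of $Q^*$ while staying away from the hyperplanes $\hc_1,\dots,\hc_r$. The classical loop-around-a-cusp construction (as in \cite{La,Voisin}) then produces, for any two indices $i,j\in\{r+1,\dots,s\}$, an element of $\pi_1(\Ps^*\setminus\dc,t)$ sending $\delta_i$ to $\pm\delta_j$.

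Third, let $V'\subseteq V$ be a nonzero monodromy-invariant subspace. Since $Q_t$ is smooth for general $t$ (a general hyperplane section of $Q$ avoids the finitely many $q_i$ and meets $Q$ transversally), $H^m(Q_t;\bQ)$ carries a non-degenerate intersection pairing, and the orthogonal decomposition $H^m(Q_t;\bQ)=I\perp V$ shows that this pairing restricts non-degenerately to $V$. Combined with step one, some $v\in V'$ must satisfy $\langle v,\delta_i\rangle\neq 0$ for at least one $i$; the Picard--Lefschetz formula
\[
T_i(v)-v=\pm\langle v,\delta_i\rangle\,\delta_i
\]
then forces $\delta_i\in V'$. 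By step two every $\delta_j$ belongs to $V'$, and by step one this yields $V'=V$, proving the irreducibility.

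The main technical point is step two: one must verify that the classical transitivity of monodromy on vanishing cycles survives the presence of the extra discriminant components $\hc_1,\dots,\hc_r$ arising from the singularities of $Q$. Concretely, the arc in $\bG(1,\Ps^*)$ that conjugates two chosen tangencies must be selected so as to avoid all of the hyperplanes $\hc_j$, which is possible by genericity since each $\hc_j$ has positive codimension in $\Ps^*$. Once this is settled, the remainder is a routine application of Picard--Lefschetz theory.
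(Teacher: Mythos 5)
Your argument is correct and follows essentially the same route as the paper: you combine Theorem \ref{ngeneralfact}, the non-degeneracy of the intersection form on $V$, the Picard--Lefschetz formula, and the conjugacy of tangential vanishing cycles (since $Q^*$ is irreducible and the $\hc_j$ meet it properly). The only cosmetic difference is bookkeeping: the paper invokes Deligne complete reducibility to split $H^m(Q_t;\bQ)=U\oplus V'$ and works with $V=(V\cap U)\oplus V'$, whereas you argue directly that some $v\in V'$ pairs non-trivially with a $\delta_i$ (which indeed follows from invariance of $V'$ together with step one) — the two bookkeeping devices are interchangeable.
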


\begin{proof}
Let $\{0\}\neq V'\subset V$ be an invariant subspace. As before,
we may write $H^{m}(Q_t;\mathbb{Q})$ $=$ $U\oplus V'$, for a
suitable invariant subspace $U$. Hence we have $V=(V\cap U)\oplus
V'$. On the other hand,  one knows that $V$ is nondegenerate with
respect to the intersection form $<\cdot,\cdot>$ on $Q_t$
(\cite{PS}, p.167). Therefore,  for some $i\in\{r+1,\dots,s\}$,
there exists $\tau\in (V\cap U)\cup V'$ such that
$<\tau,\delta_i>\neq 0$ ($Span(\delta_i):=V_i)$. From the
Picard-Lefschetz formula it follows that the tangential vanishing
cycle $\delta_i$ lies in $(V\cap U)\cup V'$. If $\delta_i\in V\cap
U$, then by Theorem \ref{ngeneralfact} we deduce $V=V\cap U$
(compare with \cite{La}, \cite{Loj}, \cite{Steenbrink},
\cite{Voisin}), and this is in contrast with the fact that
$\{0\}\neq V'$. Hence $\delta_i\in V'$, and by the same reason
$V'=V$. This proves that  $V$ is irreducible.
\end{proof}

\section{Proof of Theorem \ref{thm2}}

\subsection{The set-up}
\label{setup} Consider the rational map $Y\dasharrow \Ps
:=\Ps(H^0(Y,\ic_{W,Y}(d))^*)$ defined by the linear system
$|H^0(Y,\ic_{W,Y}(d))|$. By \cite{Fulton}, 4.4, such a rational
map defines a morphism $Bl_W(Y)\to \Ps$. We denote by $Q$ the
image of this morphism, i.e.:
\begin{equation}\label{Q}
Q:={Im(Bl_W(Y)\to \Ps)}.
\end{equation}

Set $E :=\Ps(\oc_{Y}(k)\oplus\oc_{Y}(d))$. The surjections
$\oc_{Y}(k)\oplus \oc_{Y}(d) \to \oc_{Y}(d)$ and $\oc_{Y}(k)\oplus
\oc_{Y}(d) \to \oc_{Y}(k)$ give rise to divisors $\Theta \cong
Y\subseteq E$ and $\Gamma\cong Y\subseteq E$, with $\Theta\cap
\Gamma =\emptyset$. The line bundle $\oc_{E}(\Theta )$ is base
point free and the corresponding morphism $E\to
\Ps(H^0(E,\oc_{E}(\Theta
 ))^*)$
sends $E$ to a cone over the Veronese variety of $Y$ (i.e. over
$Y$ embedded via $|H^0(Y,\oc_{Y}(d-k))|$) in such a way that
$\Gamma $ is contracted to the vertex $v_\infty$ and $\Theta $ to
a general hyperplane section. In other words, we may view $E$, via
$E\to \Ps(H^0(E,\oc_{E}(\Theta
 ))^*)$, as the blowing-up of the cone over the Veronese variety
at the vertex, and $\Gamma$ as the exceptional divisor
(\cite{Hartshorne}, p. 374, Example 2.11.4).

From the natural resolution of $\ic_{W,Y}$: $0\to \oc_{Y}(-k-d)
\to \oc_{Y}(-k)\oplus \oc_{Y}(-d) \to \ic_{W,Y} \to 0$, we find
that $Bl_W(Y)=\mathbf{Proj}(\oplus _{i\geq 0}\ic^i_{W,Y})$ is
contained in $E$, and that
$\oc_{E}(\Theta-d\Lambda)\mid_{Bl_W(Y)}\cong\oc_{Bl_W(Y)}(1)$
($\Lambda :=$ pull-back of the hyperplane section of
$Y\subseteq\Ps^N$ through $E\to Y$). Therefore:

(i) we have  natural isomorphisms: $H^0(Y,\ic_{W,Y}(d))\cong
H^0(Y,\oc_{Y}\oplus \oc_{Y}(d-k))\cong H^0(E,\oc_{E}(\Theta))$;

(ii) the linear series $|\Theta |$ cut on $Bl_W(Y)$ the linear
series spanned by the strict transforms $\tX$ of the divisors
$X\in |H^0(Y,\ic_{W,Y}(d))|$, and, sending $E$ to a cone in $\Ps$
over a Veronese variety, restricts to $Bl_W(Y)$ to the map
$Bl_W(Y)\to Q$ defined above. Hence we have  a natural commutative
diagram:
$$
\begin{array}{ccccc}
 Bl_W(Y)&\hookrightarrow  & E  \\
\downarrow &\searrow & &\searrow  \\
Y & \dasharrow &Q&\hookrightarrow & \Ps.\\
\end{array}
$$
By the same reason $\Gamma \cap Bl_W(Y) = \tG$ ($\tG:=$ the strict
transform of $G$ in $Bl_W(Y) $).  Notice that $\tG\cong G$ since
$W$ is a Cartier divisor in  $G$. Similarly $\tX\cong X$ when $G$
is not contained in  $X$;

(iii) since   $|\Theta|$ contracts $\Gamma$ to the vertex
$v_{\infty}$, the map $Bl_W(Y)\to Q$ contracts $\tG $ to
$v_{\infty}\in Q$. Furthermore we have  $Bl_W(Y) \backslash \tG
\cong Q \backslash \{v_{\infty}\}$ and so the hyperplane sections
of $Q$ not containing the vertex are isomorphic, via $Bl_W(Y)\to Q
$, to the corresponding divisors $X\in|H^0(Y,\ic_{W,Y}(d))|$;

(iv) by (ii) above, $\tG $ is a smooth Cartier divisor in
$Bl_W(Y)$, hence $\tG$ is disjoint with $Sing(Bl_W(Y))$. On the
other hand, from (\cite{Vogel}, p. 133, Proposition 4.2.6. and
proof) we know  that $Sing (W)$ is a finite set. The singularities
of $Bl_W(Y)$ must be contained in the inverse image of $Sing(W)$
via $Bl_W(Y)\to Y$: this is a finite set of lines none of which
lying in $Sing (Bl_W(Y))$ because $\tG $ meets all such lines.
Therefore $Sing (Bl_W(Y))$ must be a finite set, and so also
$Sing(Q)$ is. Observe also that $\tG$ is isomorphic to  the
tangent cone to $Q$ at $v_{\infty}$, and its degree is
$k(d-k)^mdeg\,Y$. Hence $Q$ is nonsingular at $v_{\infty}$ only
when $Y=\Ps^{m+1}$, $k=1$ and $d=2$. In this case $X$ is a smooth
quadric, therefore $dim\,{H^m(X;\mathbb Q)_{\perp
W}^{\text{van}}}\leq 1$, and Theorem \ref{thm2} is trivial. So we
may assume $v_{\infty}\in Sing(Q)$.

\subsection{The proof} We are going to prove Theorem \ref{thm2},
that is the irreducibility of the monodromy action on
$H^m(X;\mathbb Q)_{\perp W}^{\text{van}}$. The proof consists in
an application of previous Corollary \ref{corollario} to the
variety $Q\subseteq\Ps$ defined in (\ref{Q}). We keep the same
notation we introduced in \ref{setup}.

\begin{proof}[Proof of Theorem \ref{thm2}]
Consider the variety $Q\subseteq\Ps$ defined in (\ref{Q}). By the
description of it given in \ref{setup}, we know that $Q$ is an
irreducible, reduced, non-degenerate projective variety of
dimension $m+1\geq 2$, with isolated singularities.

Let $L\in \bG(1,\Ps^*)$ be a general pencil of hyperplane sections
of $Q$, and denote by $Q_L$ the blowing-up of $Q$ along the base
locus of $L$, and by $f: Q_L\to L$ the natural map (compare with
Section $3$). Denote by $\{a_1,\dots,a_s\}\subseteq L$ the set of
the critical values of $f$. The fundamental group
$\pi_1(L\backslash\{a_1, \dots ,a_s \}, t)$ ($t=$ general point of
$L$) acts by monodromy on $f^{-1}(t)$, and so on $H^{m}(f^{-1}(t);
\mathbb{Q})$, and this action induces an orthogonal decomposition:
$H^{m}(f^{-1}(t); \mathbb{Q})=I\perp V$, where $I$ is the subspace
of the invariant cocycles, and $V$ is its orthogonal complement.
By Corollary \ref{corollario} we know that $V$ is irreducible.

On the other hand, in view of \ref{setup}, we may identify
$f^{-1}(t)$ with a general $X_t\in |H^0(Y,\ic_{W,Y}(d))|$, and the
action of $\pi_1(L\backslash\{ a_1, \dots ,a_s \}, t)$ with the
action induced on $X_t$ by a general pencil of divisors in
$|H^0(Y,\ic_{W,Y}(d))|$. So, in order to prove Theorem \ref{thm2},
it suffices to prove that $H^m(X_t;\mathbb Q)_{ \perp
W}^{\text{van}}=V$. This  is equivalent  to prove that
$I=H^m(Y;\mathbb Q) +H^m(X_t;\mathbb Q)_{W}^{\text{van}}$. Since
the inclusion $H^m(Y;\mathbb Q) +H^m(X_t;\mathbb
Q)_{W}^{\text{van}}\subseteq I$ is obvious, to prove Theorem
\ref{thm2} it suffices to prove that:
\begin{equation}
\label{interpretazione} I\subseteq H^m(Y;\mathbb Q)
+H^m(X_t;\mathbb Q)_{W}^{\text{van}}.
\end{equation}

To this purpose, let $B_L\subseteq Q$ be the base locus of $L$.
Since $v_{\infty}\notin B_L$, then we may regard $B_L\subseteq
Bl_W(Y)$ via $Bl_W(Y)\to Q$. Notice that $B_L\cong X_t\cap M_L$,
for a suitable general $M_L\in|H^0(Y,\oc_{Y}(d-k))|$. Let
$Bl_W(Y)_L$ be the blowing-up of $Bl_W(Y)$ along $B_L$, and
consider the pencil $f_1:Bl_W(Y)_L\to L$ induced from the natural
map $Bl_W(Y)_L\to Q_L$. We have $Q_L \backslash f^{-1}(\{ a_1,
\dots ,a_s \})\cong Bl_W(Y)_L \backslash f_1^{-1}(\{ a_1, \dots
,a_s \})$. So, if $R_L\to Bl_W(Y)_L$ denotes a desingularization
of $Bl_W(Y)_L$, then the subspace $I$ of the invariant cocycles
can be interpreted via ${R}_L$ as $I=j^*(H^{m} ({R}_L
;\mathbb{Q}))$, where $j$ denotes the inclusion $X_t\subseteq
R_L$.

Denote by $\widetilde W$ and $\widetilde{B_L}$ the inverse images
of $W\subseteq Y$ and $B_L\subseteq Bl_W(Y)$ in $R_L$. The map
$R_L\to Y$ induces an isomorphism
$\alpha_1:R_L\backslash(\widetilde W\cup \widetilde{B_L})\to
Y\backslash (W\cup(X_t\cap M_L))$. Consider the following natural
commutative diagram:
$$
\begin{array}{ccc}
H^m(R_L;\mathbb Q)&\stackrel{\rho_1}{\to}& H^m(R_L\backslash(\widetilde W\cup \widetilde{B_L});\mathbb Q) \\
\stackrel \alpha{}\downarrow &     &  \Vert\stackrel {\alpha_1}{}\\
H^m(Y;\mathbb Q)&\stackrel{\rho_2}{\to}& H^m(Y\backslash (W\cup(X_t\cap M_L));\mathbb Q) \\
\stackrel \beta{}\downarrow &     &  \downarrow\stackrel {\beta_1}{} \\
H^m(X_t;\mathbb Q)&\stackrel{\rho_3}{\to}& H^m(X_t\backslash (W\cup(X_t\cap M_L));\mathbb Q) \\
\end{array}
$$
where $\alpha$ is the Gysin map, and fix $c\in
I=j^*(H^m(R_L;\mathbb Q))$. Let $c'\in H^m(R_L;\mathbb Q)$ such
that $j^*(c')=c$. Since
$\beta_1\circ\alpha_1\circ\rho_1=\rho_3\circ j^*$, then  we have:
$\rho_3(c)= (\rho_3\circ\beta\circ\alpha)(c')$. Hence we have
$c-\beta(\alpha(c'))\in Ker\, \rho_3= Im(H^m(X_t,X_t\backslash
(W\cup(X_t\cap M_L));\mathbb Q)\to H^m(X_t;\mathbb Q))$. Since
$H^m(X_t,X_t\backslash (W\cup(X_t\cap M_L));\mathbb Q)\cong
H_m(W\cup(X_t\cap M_L);\mathbb Q)$ (\cite{Fulton}, (3), p. 371),
we deduce $c-\beta(\alpha(c'))\in Im(H_m(W\cup(X_t\cap
M_L);\mathbb Q)\to H_m(X_t;\mathbb Q)\cong H^m(X_t;\mathbb Q))$.
So to prove  (\ref{interpretazione}), it suffices to prove that
$Im(H_m(W\cup(X_t\cap M_L);\mathbb Q)\to H_m(X_t;\mathbb Q)\cong
H^m(X_t;\mathbb Q))$ is contained in $H^m(Y;\mathbb
Q)+Im(H_m(W;\mathbb Q)\to H_m(X_t;\mathbb Q)\cong H^m(X_t;\mathbb
Q))$.

Since $W$ has only isolated singularities, and  $M_L$ is general,
then $W\cap M_L$ and $X_t\cap M_L$  are smooth complete
intersections. From Lefschetz Hyperplane Theorem and Hard
Lefschetz Theorem it follows that the natural map $H_{m-1}(W \cap
M_L;\mathbb Q)\to H_{m-1}(X_t \cap M_L;\mathbb Q)$ is injective.
Hence, from the Mayer-Vietoris sequence of the pair $(W, X_t\cap
M_L)$ we deduce that the natural map $H_{m}(W;\mathbb Q)\oplus
H_{m}(X_t\cap M_L;\mathbb Q) \to H_{m}(W\cup(X_t\cap M_L);\mathbb
Q)$ is surjective. So to prove (\ref{interpretazione}) it suffices
to prove that $Im(H_m(X_t\cap M_L;\mathbb Q)\to H_m(X_t;\mathbb
Q)\cong H^m(X_t;\mathbb Q))$ is contained in $H^m(Y;\mathbb Q)$.
And this follows from the natural commutative diagram:
$$
\begin{array}{ccc}
H_m(X_t\cap M_L;\mathbb Q)\cong H^{m-2}(X_t\cap M_L;\mathbb
Q)&\stackrel{\rho}{\leftarrow} & H^{m-2}(Y;\mathbb Q)\cong
H_{m+4}(Y;\mathbb Q) \\
\downarrow &     &  \downarrow\stackrel {\cap M_L}{}\\
H_{m}(X_t;\mathbb Q)\cong H^{m}(X_t;\mathbb Q)&
\leftarrow &   H^{m}(Y;\mathbb Q)\cong H_{m+2}(Y;\mathbb Q),\\
\end{array}
$$
taking into account that $\rho$ is an isomorphism by Lefschetz
Hyperplane Theorem. This proves (\ref{interpretazione}), and
concludes the proof of Theorem \ref{thm2}.
\end{proof}

\section{Appendix}
\begin{proof}[Proof of property (\ref{nvanishing})]
First notice that since $f^{-1}(\Delta_i) - D^{\circ}_i\to
\Delta_i$ is a trivial fiber bundle ($D^{\circ}_i$= interior of
$D_i$), then the inclusion $(f^{-1}(a), f^{-1}(a)\cap
D_i)\subseteq (f^{-1}(\Delta_i),f^{-1}(\Delta_i)\cap D_i)$ induces
natural isomorphisms $H_{m}(f^{-1}(a), f^{-1}(a)\cap D_i
;\mathbb{Q})$ $\cong H_{m}(f^{-1}(\Delta_i),f^{-1}(\Delta_i)\cap
D_i ;\mathbb{Q})$ for any $a\in \Delta_i$ (use \cite{Spanier}, p.
200 and 258). So, from the natural commutative diagram:
$$
\begin{array}{ccc}
H_{m}(f^{-1}(a_i+\rho);\mathbb{Q})&\stackrel{\beta}{\to}&H_{m}(f^{-1}(a_i+\rho), f^{-1}(a_i+\rho)\cap D_i ;\mathbb{Q})\\
\stackrel {\alpha}{}\downarrow&  &\Vert\\
H_{m}(f^{-1}(\Delta_i);\mathbb{Q})& {\to}&H_{m}(f^{-1}(\Delta_i), f^{-1}(\Delta_i)\cap D_i ;\mathbb{Q}),\\
\end{array}
$$
we deduce that $Ker\,\alpha\subseteq Ker\,\beta =M_i$.

On the other hand, since  the inclusion $f^{-1}(a_i+\rho)\subseteq
f^{-1}(\Delta_i)$ is the composition of the isomorphism
$f^{-1}(a_i+\rho)\cong g^{-1}(a_i+\rho)$ with
$g^{-1}(a_i+\rho)\subseteq g^{-1}(\Delta_i)$, followed by the
desingularization $g^{-1}(\Delta_i)\to f^{-1}(\Delta_i)$,  we
have: $V_i\subseteq Ker\,\alpha$.
\end{proof}

\medskip

\begin{lemma}\label{nWhitney}
Let $\ell \subseteq\hc_i$ be a general line. For any $u\in\ell\cap
Q^*$, denote by $\Delta_{u}^{\circ}$ an open disk of $\ell$ with
center $u$ and small radius. Consider the compact
$K:=\ell\backslash (\bigcup_{u\in{\ell\cap
Q^*}}\Delta_{u}^{\circ})$. Then there is a closed ball
$D_{q_i}\subseteq{ \Ps}^*\times \Ps$, with positive radius and
centered at $q_i$, such that for any $x\in K$ the distance
function $p\in H_x\cap Q\cap D_{q_i}\to ||p-q_i||\in\mathbb{R}$
has no critical points $p\neq q_i$.
\end{lemma}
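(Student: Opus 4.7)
The plan is to combine the classical Milnor statement for a single hyperplane section with a compactness argument over $K$, using curve selection and Whitney regularity in family to promote the Milnor radius to a uniform one.

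First I would verify the pointwise claim: for every $x\in K$ the hyperplane section $H_x\cap Q$ has $q_i$ as its unique singularity in a small neighborhood of $q_i$. Since $\ell\subseteq\hc_i$ is chosen generically, $\ell$ avoids each $\hc_j$ with $j\neq i$ (away from $\hc_i\cap\hc_j$, which can be excluded by shrinking the picture near $q_i$), so $H_x$ meets $Sing(Q)=\{q_1,\dots,q_r\}$ only at $q_i$; and the hypothesis $x\in K\subseteq\ell\setminus Q^*$ forces $H_x$ to be transverse to $Q$ along the smooth locus of $Q$. Milnor's classical theorem then yields, for each fixed $x\in K$, a radius $\epsilon_x>0$ such that $p\mapsto||p-q_i||$ has no critical points on $(H_x\cap Q\cap B_{\epsilon_x}(q_i))\setminus\{q_i\}$.

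The heart of the proof is making $\epsilon_x$ uniform in $x\in K$. I would argue by contradiction: if no such uniform ball $D_{q_i}$ exists, there are sequences $x_n\in K$ and $p_n\in H_{x_n}\cap Q\setminus\{q_i\}$ with $p_n\to q_i$ and $p_n$ a critical point of $p\mapsto||p-q_i||$ restricted to $H_{x_n}\cap Q$. By compactness of $K$ we may assume $x_n\to x_0\in K$. Consider the incidence variety $\Xi:=\{(x,p)\in\ell\times\Ps\,:\,p\in H_x\cap Q\}$ and the subanalytic locus $\Lambda\subseteq\Xi$ of fiberwise critical points of $\rho(x,p):=||p-q_i||^2$ over $K$. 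The hypothetical sequences show that $(x_0,q_i)\in\overline{\Lambda\setminus(\ell\times\{q_i\})}$, and the curve selection lemma furnishes a real analytic arc $(x(t),p(t))\in\Lambda$ with $x(t)\to x_0$, $p(t)\to q_i$, and $p(t)\neq q_i$ for $t>0$.

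To reach a contradiction I would fix a Whitney stratification of $\Xi$ refining the partition into $\ell\times\{q_i\}$ and its complement. The fiberwise criticality condition $(p(t)-q_i)\perp T_{p(t)}(H_{x(t)}\cap Q)$, combined with Whitney's condition (b) at $(x_0,q_i)$, forces the limiting secant direction $\lim_{t\to 0^+}(p(t)-q_i)/||p(t)-q_i||$ to lie in the limit of the fiber tangent spaces $\lim T_{p(t)}(H_{x(t)}\cap Q)$, which is incompatible with the orthogonality along the arc. Hence a uniform radius $\epsilon$ exists, and $D_{q_i}$ may be taken as the corresponding closed tubular neighborhood of $\{q_i\}$ in $\Ps^*\times\Ps$. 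The main obstacle is precisely this Whitney-regularity-in-family step; it is the direct analogue, in the present setup, of what we established in \cite{DGF}, Lemma 3.4 (v), and the same mechanism applies here virtually unchanged.
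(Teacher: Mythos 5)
Your approach is genuinely different from the paper's, and it has a gap at the key step. The paper also argues by contradiction, taking a sequence $y_n\in K$ converging to $x\in K$ and critical points $p_n\to q_i$, but it then works directly with limits of the tangent spaces $T_{p_n,Q}$, of $T'_{p_n,H_{y_n}\cap Q}$, and of the secants through $q_i$ and $p_n$: the orthogonality of the secant to $T'$ plus a dimension count show that the limit $T$ of $T_{p_n,Q}$ is contained in $H_x$, so $x\in Q^*$, which contradicts $x\in K$. In other words the paper's contradiction is reached by showing the hypothetical sequence would force $H_x$ to be a (limiting) tangent hyperplane, and the hypothesis $K\cap Q^*=\emptyset$ kills this immediately; no incidence variety, curve selection, or stratification of $\Xi$ appears.

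The problem with your version is the sentence invoking \emph{Whitney's condition (b)} on a stratification of $\Xi$ to conclude that the limiting secant direction lies in $\lim T_{p(t)}(H_{x(t)}\cap Q)$. Whitney (b) for a stratum pair of $\Xi$ gives that the limiting (vertical) secant lies in the limit of the \emph{stratum} tangent spaces $T_{(x(t),p(t))}X\subseteq T(\ell\times\Ps)$, not in the limit of the \emph{fiber} tangent spaces $T_{p(t)}(H_{x(t)}\cap Q)=T_{(x(t),p(t))}X\cap\ker d\pi_\ell$. These two limits differ in general: the vertical part of $\lim T_{(x(t),p(t))}X$ may be strictly larger than $\lim T_{p(t)}(H_{x(t)}\cap Q)$. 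What you actually need is a Thom $a_f$ (equivalently, $w_f$) condition for the projection $\pi_\ell\colon\Xi\to\ell$ relative to a stratification compatible with $\ell\times\{q_i\}$, which does exist for complex analytic maps but is a strictly stronger regularity property than plain Whitney (b) of $\Xi$. You signal the right idea with the phrase ``Whitney regularity in family,'' and citing \cite{DGF} is consistent, but the step as written does not go through; either replace Whitney (b) by an explicit Thom stratification of $\pi_\ell$, or adopt the paper's more elementary route via the dual variety, which sidesteps the fiberwise regularity issue entirely.
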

\begin{proof}
We argue by contradiction. Suppose the claim is false. Then there
is a sequence of hyperplanes $y_n\in K$, $n\in\mathbb N$,
converging to some $x\in K$, and a sequence of critical points
$p_n\neq q_i$ for the distance function on $H_{y_{n}}\cap Q$,
converging to $q_i$ (we  may assume $p_n$ is smooth for
$H_{y_{n}}\cap Q$). Let $T_{p_{n}, Q}$, $T'_{p_{n},H_{y_{n}}\cap Q
}$ and $s_{q_i,p_n}$ be the corresponding sequences of tangent
spaces and secants, and denote by $r_{q_i,p_n}\subseteq
s_{q_i,p_n}$ the real line meeting $q_i$ and $p_n$. We may  assume
they converge, and we denote by $T$, $T'$, $s$ and $r$ their
limits ($r\subseteq s$). Since $p_n$ is a critical point, then
$r_{q_i,p_n}$ is orthogonal to $T'_{p_{n},H_{y_{n}}\cap Q}$, hence
$r\not \subseteq T'$, and so $T$ is spanned by $T'\cup s$ by
dimension reasons. Since $T'\cup s\subseteq H_x$ then $T\subseteq
H_x$, so $H_x$ contains a limit of tangent spaces of $Q$, with
tangencies converging to $q_i$. This implies that $x\in Q^*$,
contradicting the fact that $x\in K$.
\end{proof}

{\bf{Aknowledgements}}

We would like to thank Ciro Ciliberto   for valuable discussions
and suggestions on the subject of this paper.

\end{document}